\theoremstyle{plain}
\newtheorem{teo}{Theorem}
\newtheorem{prop}[teo]{Proposition}
\newtheorem{cor}[teo]{Corollary}
\newtheorem{lem}[teo]{Lemma}
\theoremstyle{definition}
\theoremstyle{remark}
\newtheorem{oss}[teo]{Remark}
\newcommand{\rp}[1]{\ensuremath{\mathbb{RP}^{#1}}}
\newcommand{\s}[1]{\ensuremath{\mathbf{S}^{#1}}}
\begin{document}

\title{On knots and links in lens spaces}

\author{Alessia Cattabriga, Enrico Manfredi, Michele Mulazzani}

\maketitle

\begin{abstract}

In this paper we study some aspects of knots and links in lens spaces. Namely, if we consider lens spaces as quotient of the unit ball $B^{3}$ with suitable identification of boundary points, then we can project the links on the equatorial disk of $B^{3}$, obtaining a regular diagram for them.
In this contest, we obtain  a complete finite set of Reidemeister type moves establishing equivalence, up to ambient isotopy,  a Wirtinger type presentation for the fundamental group of the complement of the link and a diagrammatic method giving the first homology group.
We also compute Alexander polynomial and twisted Alexander polynomials of this class of links, showing their correlation with Reidemeister torsion.\\
\\ {{\it Mathematics Subject
Classification 2010:} Primary 57M25, 57M27; Secondary  57M05.\\
{\it Keywords:} knots/links, lens spaces, Alexander polynomial, Reidemeister torsion.}\\

Work performed under the auspices of G.N.S.A.G.A.
of C.N.R. of Italy and supported by M.U.R.S.T., by the University
of Bologna, funds for selected research topics.

\end{abstract}

\begin{section}{Introduction}

Knot theory is a widespread branch of geometric topology, with many
applications to theoretical physics, chemistry and biology. The
mainstream of this research have been concentrated for more than one
century in the study of knots/links in the 3-sphere, which is the
simplest closed 3-manifolds, and where the theory is completely
equivalent to the one in the familiar space $R^3$. That study was maily
conducted by the use of regular diagrams, which are suitable projection
of the knot/link in a disk/plane. In this way the 3-dimensional
equivalence problem in translated in a 2-dimensional equivalence problem
of diagrams. Reidemeister proved that two knots/links are equivalent if
any of their diagrams can be connected by a finite sequence of three
local moves, called Reidemeister moves. Diagrams also helps to obtain
invariants as the fundamental group of the exterior of the link
(also called group of the link), via Wirtinger
theorem, while the homology groups, as well as higher homotopy groups,
are not relevant in the theory. From the fundamental group other
important invariant as Alexander polynomials (classical and twisted) have been obtained, while
from the diagram state sum type invariant derive, as Jones polinomials and quandle invariants.

In the last two decades, studies on knots/links have been generalized in
more complicated spaces as solid torus (see \cite{Be}, \cite{Ga1},
\cite{Ga2}), or lens spaces, which are the simplest closed
3-manifolds different from the 3-sphere.
Particuarly important are the class of $(1,1)$-knots (knots in either
$\s3$ or a lens space, also called genus one 1-bridge knots) intensively
studied by many authors (see \cite{CM}, \cite{CK}, \cite{Fu}, \cite{Ha},
\cite{MS}, \cite{Wu}).

In 1991, Drobotukhina introduced diagrams and moves for knots and links
in the projective space, which is a special case of lens space,
obtaining in this way an approach to compute a Jones type invariant for
these links (see \cite{Dr}).
More recently, Huynh and Le in \cite{HL} obtained a formula for the
computation of the twisted Alexander polynomial for links in the
projective space.

In this paper we extend some of those results for knots/links in the
whole family of lens spaces. Our approach use the model of lens spaces
obtained by suitable identification on the boundary of a 3-ball
described in Section 2, where a concept of regular projection and relative
diagrams for the link is defined. In Section 3 we show that the
equivalence between links in lens spaces can be translated in
equivalence between diagrams, via a finite sequence of seven type of
moves, generalizing the Reidemeister ones. In Section 4 a Wirtinger type
presentation for the group of the link is given. In this contest the
homology group are not abelian free groups (as in \s3), since a torsion part
appears, and in Section 5 a method to compute that directly from the
diagram is given. In Section 6 we deal with the twisted Alexander polynomials of these links, finding different properties and exploiting the connection with the Reidemeister torsion.

\end{section}


\begin{section}{Diagrams}

In this paper we work in the \emph{Diff} category (of smooth manifolds and smooth maps). Every result also holds in the \emph{PL} category, and in the \emph{Top} category if we consider only tame links.

A \emph{link} $L$ in a closed $3$-manifold $M^{3}$ is a 1-dimensional submanifold \hbox{$L\subset M^{3}$.} Obviously, $L$ is homeomorphic to $\nu$ copies of $\s{1}$. When $\nu=1$ the link is called a \emph{knot}. Two links $L',L''\subset M^{3}$ are called \emph{equivalent} if there exists an ambient isotopy $H: M^{3} \times [0,1] \rightarrow M^{3}$ such that $h_{1}(L')=L''$, where $h_t(x)=H(x,t)$. 

Consider the unit ball $B^{3}=\{(x_{1},x_{2},x_{3}) \in \mathbb{R}^{3} \ | \ x_{1}^{2}+x_{2}^{2}+x_{3}^{2}\leqslant1\}$ and let $E_{+}$ and $E_{-}$ be respectively the upper and the lower closed hemisphere of $\partial B^{3}$. Call $B^{2}_{0}$ the equatorial disk, defined by the intersection of the plane $x_{3}=0$ with $B^{3}$, and label with $N$ and $S$ respectively the "north pole" $(0,0,1)$ and the "south pole" $(0,0,-1)$ of $B^{3}$.

If $p$ and $q$ are two coprime integers such that $ 0 \leqslant q < p$, 
let \mbox{$g_{p,q}: E_{+} \rightarrow E_{+}$} be the rotation of $2 \pi q /p$ around the $x_{3}$-axis, as in Figure~\ref{L(p,q)}, and \hbox{$f_{3}: E_{+} \rightarrow E_{-}$} be the reflection with respect to the plane $x_{3}=0$.
The \emph{lens space} $L(p,q)$ is the quotient of $B^{3}$ by the equivalence relation on $\partial B^{3}$ which identifies $x \in E_{+}$ with $f_{3} \circ g_{p,q} (x) \in E_{-}$. We denote by \mbox{$F: B^{3} \rightarrow L(p,q)=B^{3} / \sim$} the quotient map. Note that on the equator $\partial B^{2}_{0}=E_{+} \cap E_{-}$ each equivalence class contains $p$ points. 

\begin{figure}[h!]                      
\begin{center}                         
\includegraphics[width=9.2cm]{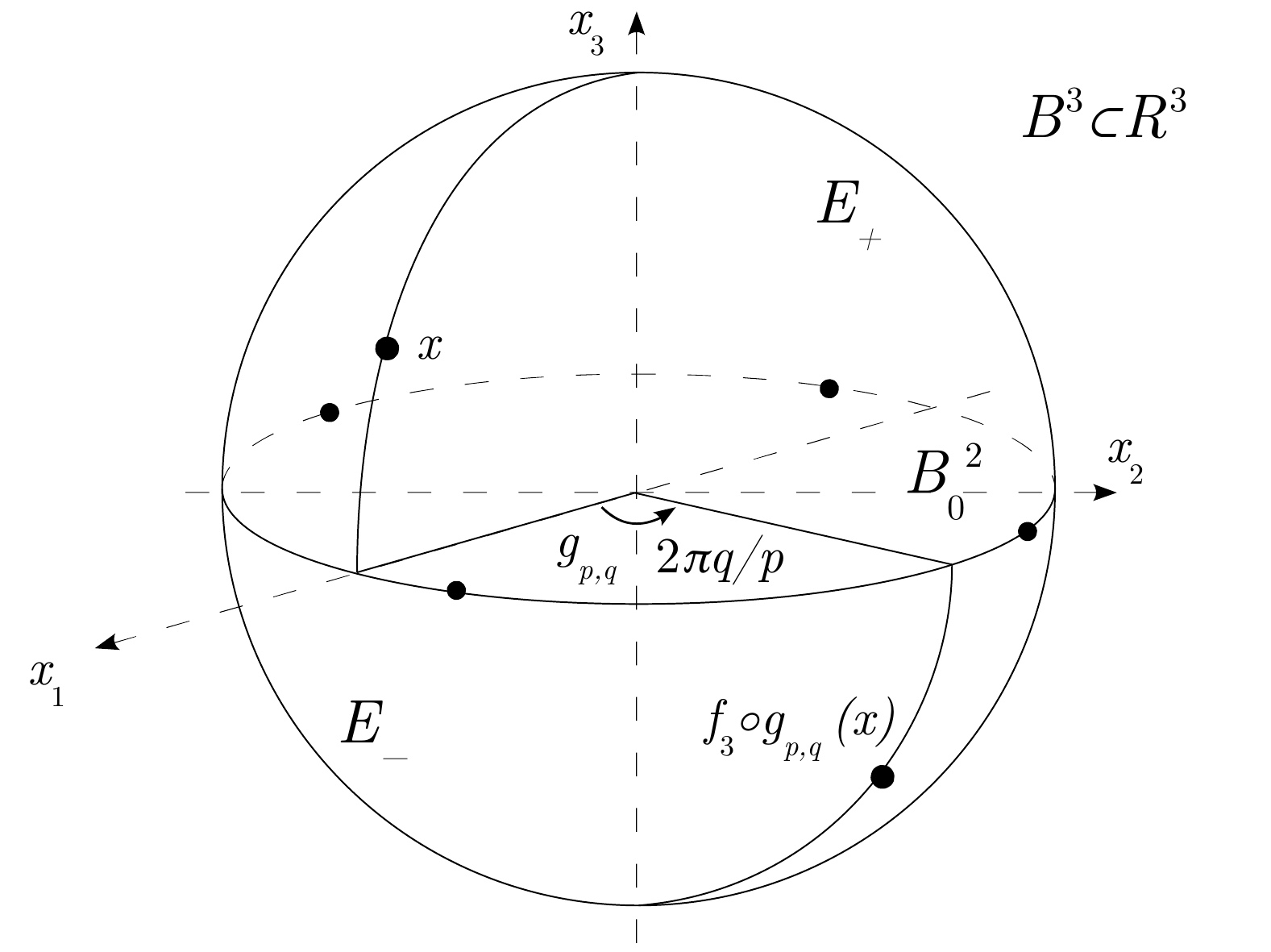}
\caption[legenda elenco figure]{Representation of $L(p,q)$.}\label{L(p,q)}
\end{center}
\end{figure}

It is easy to see that $L(1,0)\cong \s{3}$ since $g_{1,0}=\text{Id}_{E_{+}}$. 
Furthermore, $L(2,1)$ is $ \rp{3}$, since the above construction gives the usual model of the projective space where opposite points on the boundary of $B^3$ are identified.

In the following we improve the definition of diagram for links in lens spaces given by Gonzato \cite{GM}.
Assume $p>1$, since $L(1,0)\cong \s{3}$ is the classical case. Let $L$ be a link in $L(p,q)$ and consider $L'=F^{-1}(L)$. By moving $L$ via a small isotopy in $L(p,q)$, we can suppose that:
\begin{enumerate}
\item[i)] $L'$ does not meet the poles $N$ and $S$ of $B^{3}$;
\item[ii)] $L' \cap \partial B^{3}$ consists of a finite set of points;
\item[iii)] $L'$ is not tangent to $\partial B^3$;
\item[iv)] $L' \cap \partial B^{2}_{0} = \emptyset$.\footnote{The small isotopy that allows $L'$ to avoid the equator $\partial B_{0}^{2}$ is depicted in Figure~\ref{complex}.}
\end{enumerate}

\begin{figure}[h]                      
\begin{center}                         
\includegraphics[width=14cm]{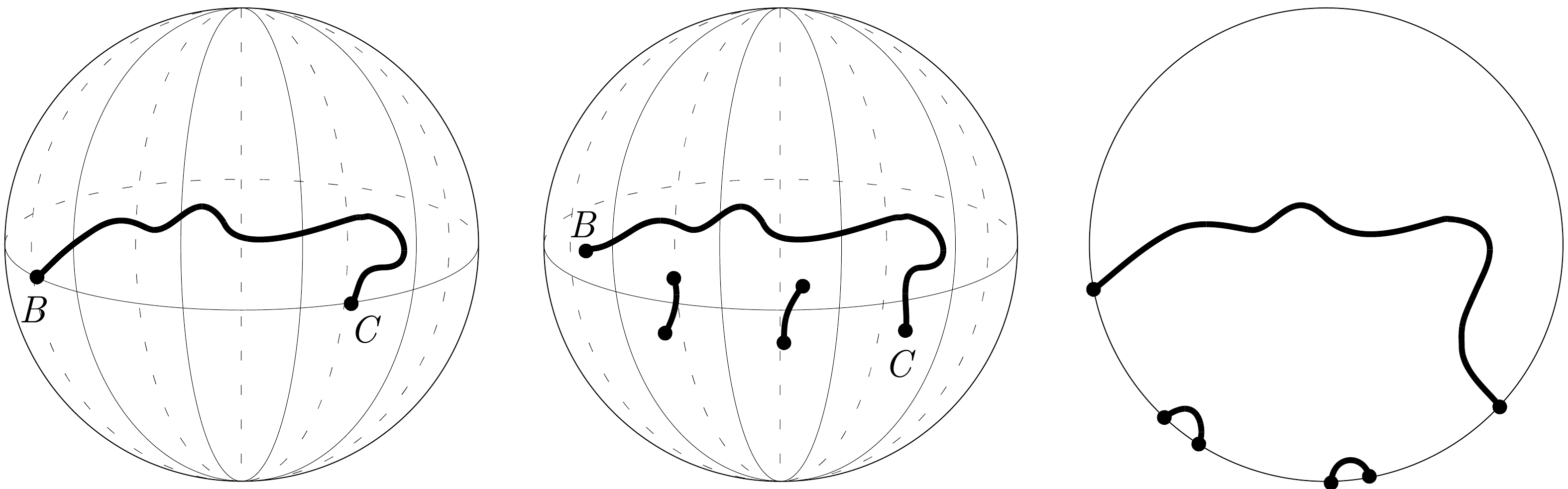}
\caption[legenda elenco figure]{Avoiding $\partial B^{2}_{0}$ in $L(9,1)$.}\label{complex}
\end{center}
\end{figure}

As a consequence, $L'$ is the disjoint union of closed curves in $\text{int} B^{3}$ and arcs properly embedded in $ B^{3}$ (i.e., only the boundary points belong onto $\partial B^{3}$).

Let $\mathbf{p} : B^{3} \smallsetminus \{ N,S \} \rightarrow B^{2}_{0}$ be the projection defined by $\mathbf{p}(x)=c(x) \cap B^{2}_{0}$, where $c(x)$ is the circle (possibly a line) through $N$, $x$ and $S$.
Take $L'$ and project it using $\mathbf{p}_{|L'}: L' \rightarrow B^{2}_{0}$. 
For $P \in \mathbf{p}(L')$, the set $\mathbf{p}_{|L'}^{-1}(P)$ may contain more than one point; in this case, we say that $P$ is a \emph{multiple point}. In particular, if it contains exactly two points, we say that $P$ is a \emph{double point}.
We can assume, by moving $L$ via a small isotopy, that the projection $\mathbf{p}_{|L'} : L' \rightarrow B^{2}_{0}$ of $L$ is \emph{regular}, namely:
\begin{enumerate}
\item[1)] the projection of $L'$ contains no cusps;
\item[2)] all auto-intersections of $\mathbf{p}(L')$ are transversal;
\item[3)] the set of multiple points is finite, and all of them are actually double points;
\item[4)] no double point is on $\partial B^{2}_{0}$.
\end{enumerate}

Now let $Q$ be a double point, consider $\mathbf{p}_{|L'}^{-1}(Q)=\{  P_{1}, P_{2} \}$ and suppose that $P_{1}$ is closer to $N$ than $P_{2}$. 
Let $U$ be a connected open neighborhood of $P_{2}$ in $L'$ such that $\mathbf{p}(\overline{U})$ contains no other double point and does not meet $\partial B_{0}^{2}$. We call $U$ \emph{underpass} relative to $Q$. Every connected component of the complement in $L'$ of all the underpasses (as well as its projection in $B^{2}_{0}$) is called \emph{overpass}. 

A \emph{diagram} of a link $L$ in $L(p,q)$ is a regular projection of $L'=F^{-1}(L)$ on the equatorial disk $B^{2}_{0}$, with specified overpasses and underpasses\footnote{As usual, the projections of the underpasses are not depicted in the diagram.} (see Figure~\ref{link3}). 

\begin{figure}[h!]                      
\begin{center}                         
\includegraphics[width=12cm]{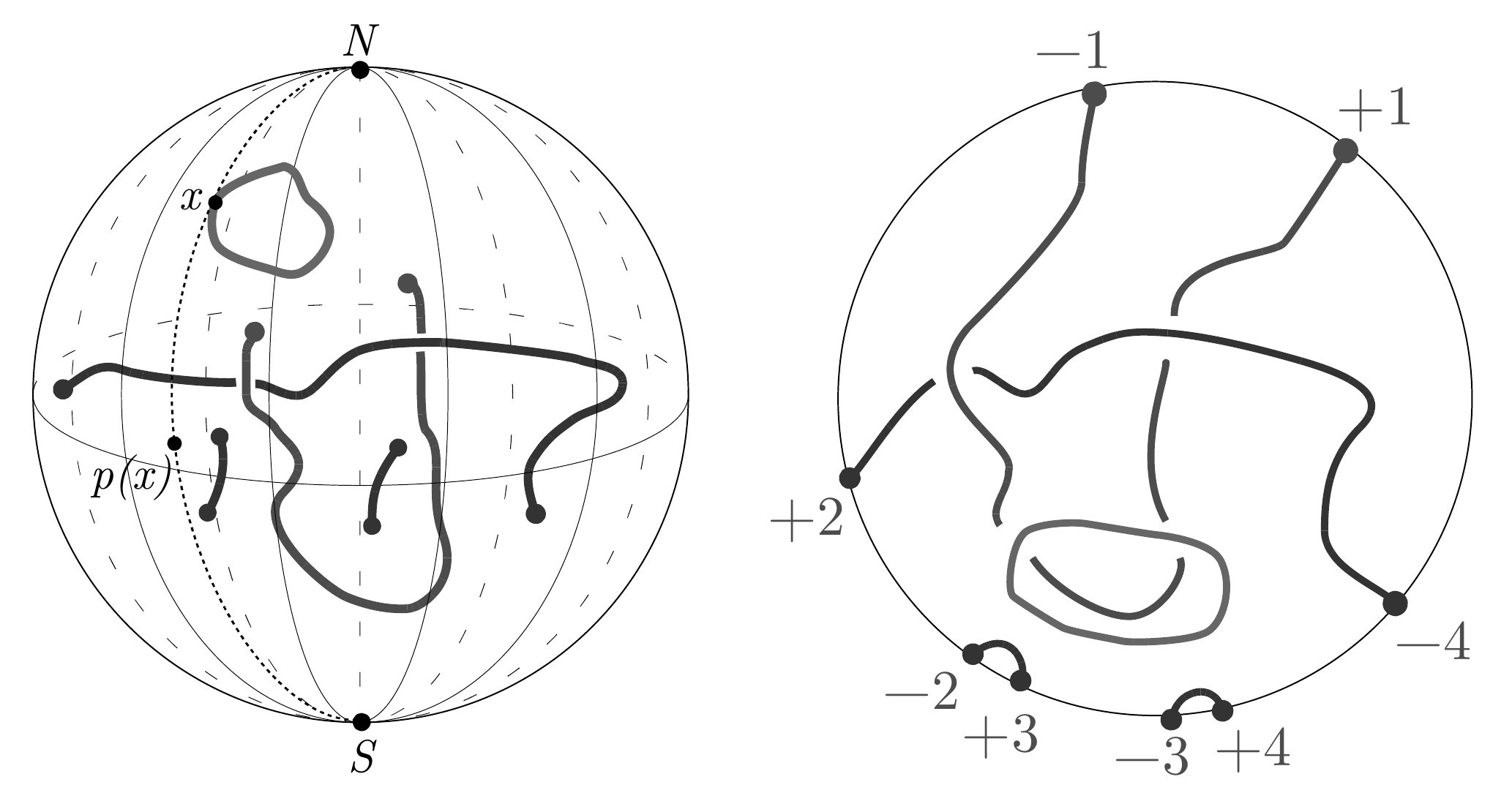}
\caption[legenda elenco figure]{A link in $L(9,1)$ and the corresponding diagram.}\label{link3}
\end{center}
\end{figure}

We assume that the equator is oriented counterclockwise if we look at it from $N$.
According to the orientation, label with $+1, \ldots, +t$ the endpoints of the overpasses belonging to the upper hemisphere, and with $-1, \ldots, -t$ the endpoints on the lower hemisphere, respecting the rule $+i \sim -i$. An example is shown in Figure~\ref{link3}.

Note that for the case $L(2,1) \cong \rp{3}$ we get exactly the diagram described in \cite{Dr}.

\end{section}


\begin{section}{Generalized Reidemeister moves}

In this section we obtain a finite set of moves connecting two different diagrams of the same link.
The \emph{generalized Reidemeister moves} on a diagram of a link $L \subset L(p,q)$, are the moves $R_{1}, R_{2}, R_{3}, R_{4}, R_{5}, R_{6}$ and $R_{7}$ of Figure~\ref{vR1-R7}. Observe that, when $p=2$ the moves $R_{5}$ and $R_{6}$ are equal, and $R_{7}$ is a trivial move.

\begin{figure}[h!]                      
\begin{center}                         
\includegraphics[width=12.86cm]{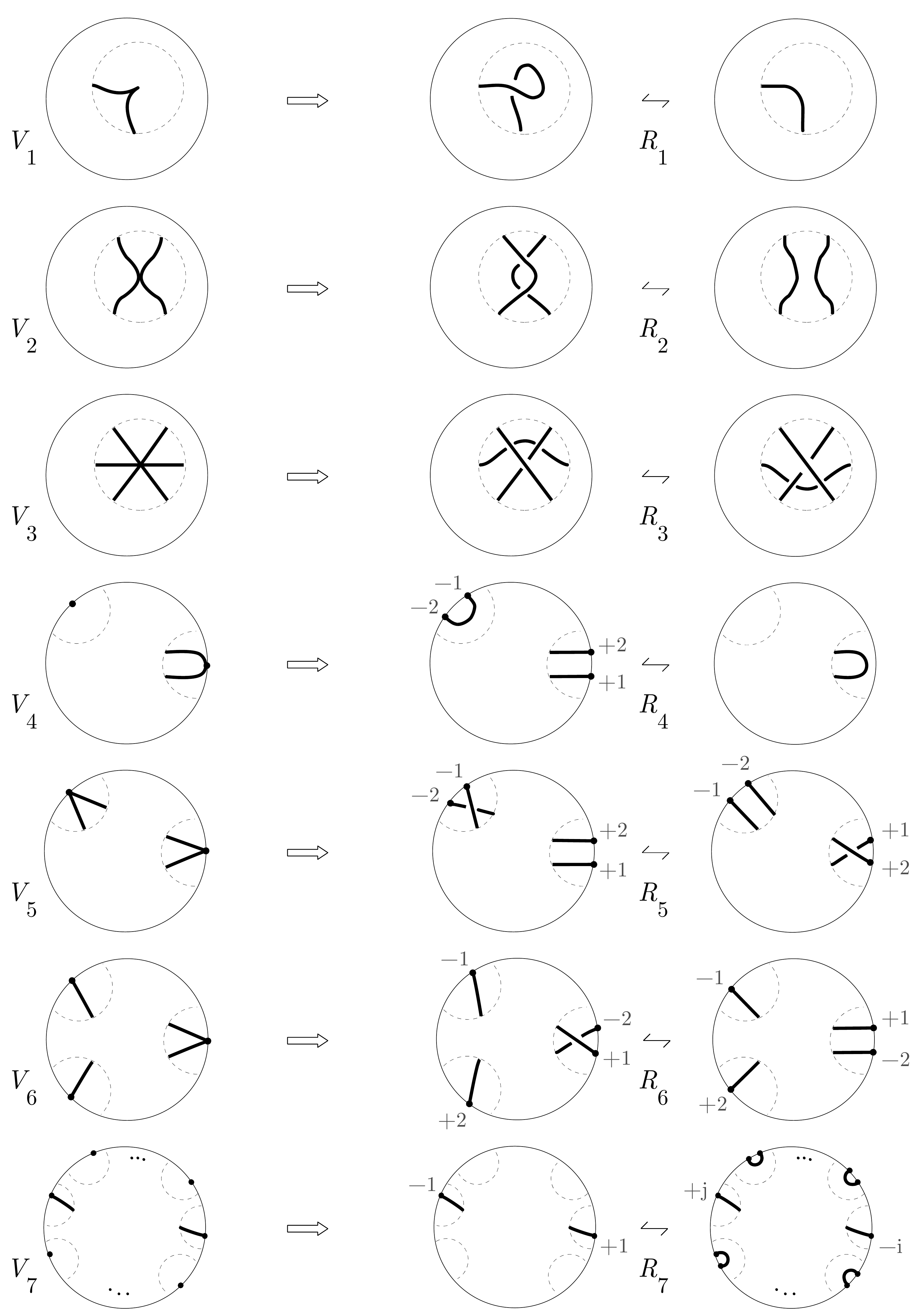}
\caption[legenda elenco figure]{Generalized Reidemeister moves.}\label{vR1-R7}
\end{center}
\end{figure}

\begin{teo}
Two links $L_{0}$ and $L_{1}$ in $L(p,q)$ are equivalent if and only if their diagrams can be joined by a finite sequence of generalized Reidemeister moves $R_{1}, \ldots, R_{7}$ and diagram isotopies, when $p>2$. If $p=2$, moves $R_{1}, \ldots, R_{5}$ are sufficient.
\end{teo}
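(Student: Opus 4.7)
The plan is to prove the biconditional in two steps, following the classical PL/general-position argument of Reidemeister adapted to the boundary identifications defining $L(p,q)$.

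For the ``if'' direction I would check separately that each move $R_i$ is realized by an ambient isotopy in $L(p,q)$. Moves $R_1,R_2,R_3$ are the classical Reidemeister moves and are supported in a small ball contained in $\mathrm{int}\, B^{3}$, so the standard $\s{3}$ argument applies verbatim. For $R_4,\ldots,R_7$ I would exhibit a local isotopy in a collar neighborhood of $\partial B^{3}$ and verify by inspection that, after projection to $B^{2}_{0}$, it realizes the stated diagrammatic change; here it is essential to take into account that a motion of an endpoint on $E_{+}$ drags its identified partner on $E_{-}$ by $f_{3}\circ g_{p,q}$, which is what produces the twist visible in $R_7$.

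For the ``only if'' direction I would lift an ambient isotopy $\{h_{t}\}$ between $L_0$ and $L_1$ to an isotopy of $L'=F^{-1}(L)$ in $B^{3}$, and perturb it slightly so that the one-parameter family of projections $\mathbf{p}\circ h_{t}(L')$ to $B^{2}_{0}$ fails to be regular (in the sense of Section~2) only at finitely many instants, each failure being generic (codimension one). Between two consecutive critical instants the diagram varies by planar isotopy, so it suffices to classify the elementary events. The classical list gives exactly the interior moves $R_{1}$ (appearance or disappearance of a kink), $R_{2}$ (a tangency between two projected strands), and $R_{3}$ (a triple point). In addition, the boundary contributes: an arc of $L'$ becoming tangent to $\partial B^{3}$, which creates or destroys a pair of identified endpoints (move $R_{4}$); a crossing of the projection reaching $\partial B^{2}_{0}$, which after splitting gives an over- or under-strand passing an endpoint (moves $R_{5}$ and $R_{6}$, depending on over/under); and two endpoints on the same hemisphere colliding and swapping, whose effect on the diagram, once tracked through the identification by $g_{p,q}$, is exactly $R_{7}$.

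The main obstacle will be the boundary analysis: one must check that every generic codimension-one singularity involving $\partial B^{3}$ is covered by $R_{4},\ldots,R_{7}$, and in particular that simultaneous events occurring at a pair of identified endpoints factor through the listed moves rather than requiring new ones. For the specialization $p=2$, the map $g_{2,1}$ is the antipodal involution on $\partial B^{3}$, so identified endpoints are antipodal; this forces $R_{7}$ to become a planar isotopy and collapses $R_{5}$ and $R_{6}$ into the same move, reducing the list to $R_{1},\ldots,R_{5}$ and recovering Drobotukhina's theorem for $\rp{3}$.
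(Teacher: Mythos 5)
Your overall strategy coincides with the paper's: both directions are handled by the standard general-position argument, classifying the codimension-one failures of the regularity conditions during the ambient isotopy and matching each elementary event to a move. The interior events giving $R_{1},R_{2},R_{3}$, the tangency with $\partial B^{3}$ giving $R_{4}$, and the double point reaching $\partial B^{2}_{0}$ giving $R_{5}$ and $R_{6}$ are all as in the paper (though the paper distinguishes $R_{5}$ from $R_{6}$ by whether the two arcs end in the same or in different hemispheres, not by over/under).

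There is, however, a genuine gap at exactly the point you flag as ``the main obstacle'' and then leave unresolved. The event in which $L'$ meets the equator $\partial B^{2}_{0}$ does not produce a single move: since on the equator each identification class contains $p$ points, this event splits into a family of $p-1$ distinct configurations $V_{7,1},\ldots,V_{7,p-1}$, according to whether the two new endpoint pairs are identified by $g_{p,q}$ or by $g_{p,q}^{k}$ for $k=2,\ldots,p-1$. A priori this yields $p-1$ moves $R_{7,1},\ldots,R_{7,p-1}$, not the single move $R_{7}$ in the statement. The missing step — and the only genuinely combinatorial lemma in the paper's proof — is that each $R_{7,k}$ with $k\geq 2$ factors as one $R_{7}=R_{7,1}$ followed by $k-1$ repetitions of the sequence $R_{6}$--$R_{4}$--$R_{1}$ retracting the small arc whose endpoints carry the same sign. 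Without this reduction your argument proves equivalence via a generating set of moves whose size grows with $p$, which is weaker than the stated theorem. Your remarks on the ``if'' direction and on the collapse of the move set for $p=2$ are correct and match the paper.
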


\begin{proof}
It is easy to see that each Reidemeister move connects equivalent links, hence a finite sequence of Reidemeister moves and diagram isotopies does not change the equivalence class of the link.

On the other hand, if we have two equivalent links $L_{0}$ and $L_{1}$, then there exists an isotopy of the ambient space $H: L(p,q) \times [0,1] \rightarrow L(p,q)$ such that $h_{1}(L_{0})=L_{1}$. For each $t \in [0,1]$ we have a link $L_{t}=h_{t}(L_{0})$. 

The link $L_{t}$ may violate conditions i), ii), iii), iv) and its projection can violate the regularity conditions 1), 2), 3) and 4).

It is easy to see that the isotopy $H$ can be chosen in such a way that conditions i) and ii) are satisfied at any time. Moreover, using general position theory (see \cite{R} for details) we can assume that there are a finite number of forbidden configurations and that for each \mbox{$t \in [0,1]$}, only one of them may occur. 
The remaining conditions might be violated during the isotopy as depicted in the left part of Figure~\ref{vR1-R7}. More precisely, 
\begin{itemize}\itemsep-5pt
\item[--] conditions 1), 2) and 3) generate configurations $V_{1}$, $V_{2}$ and $V_{3}$;
\item[--] condition iii) generates $V_{4}$;
\item[--] condition 4) generates $V_{5}$ and $V_{6}$; the difference between the two configurations is that $V_{5}$ involves two arcs of $L'$ ending in the same hemisphere of $\partial B^{3}$, while $V_{6}$ involves arcs ending in different hemispheres;
\item[--] from condition iv) we have a family of configurations $V_{7,1}, \ldots, V_{7,p-1}$ (see Figure~\ref{v7f}); the difference between them is that $V_{7,1}$ has the endpoints of the projection identified directly by $g_{p,q}$, while $V_{7,k}$ has the endpoints identified by $g_{p,q}^{k}$, for $k=2, \ldots, p-1$.
\end{itemize}
\begin{figure}[h!]                      
\begin{center}                         
\includegraphics[width=14cm]{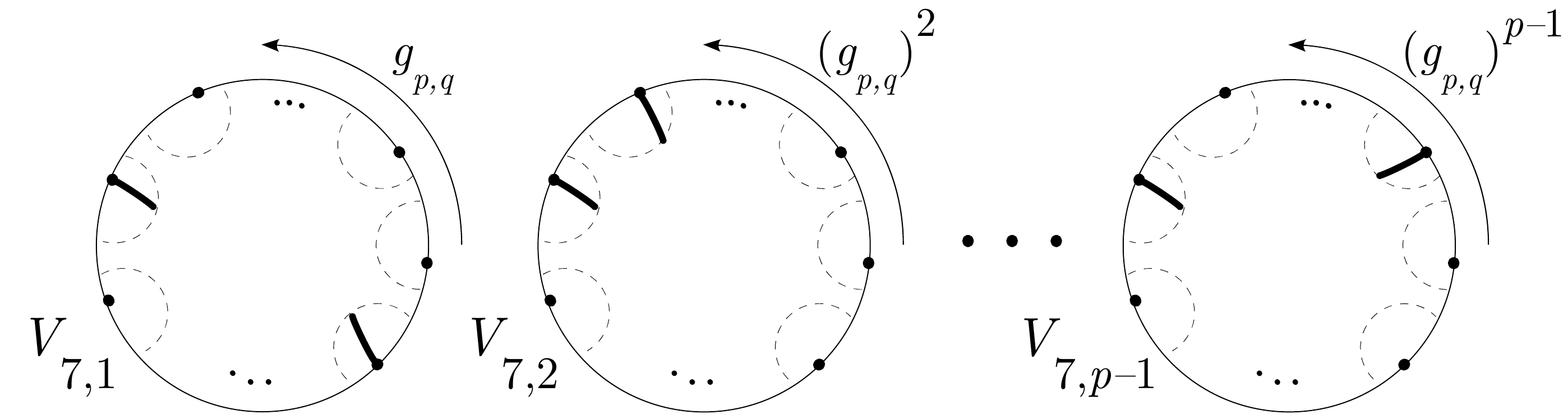}
\caption[legenda elenco figure]{Forbidden configurations $V_{7,1},V_{7,2}, \ldots, V_{7,p-1}$.}\label{v7f}
\end{center}
\end{figure}
\vspace{-1mm}
From each type of forbidden configuration a transformation of the diagram appears, i.e. a generalized Reidemeister move, as follows (see Figure~\ref{vR1-R7}): 
\vspace{-1mm}
\begin{itemize}\itemsep-5pt
\item[--] from $V_{1}$, $V_{2}$ and $V_{3}$ we obtain the usual Reidemeister moves $R_{1},R_{2}$ and $R_{3}$;
\item[--] from $V_{4}$ we obtain move $R_{4}$;
\item[--] from $V_{5}$, we obtain two different moves: $R_{5}$ if the overpasses endpoints belong to the same hemisphere, and $R_{6}$ otherwise;
\item[--] from $V_{7,1}, \ldots, V_{7,p-1}$ we obtain the moves $R_{7,1}, \ldots , R_{7,p-1}$.
\end{itemize}

\noindent Nevertheless the moves $R_{7,2}, \ldots, R_{7, p-1}$ can be seen as the composition of $R_{7}=R_{7,1}$, $R_{6}$, $R_{4}$ and $R_{1}$ moves. 
More precisely, the move $R_{7,k}$, with \mbox{$ k =2, \ldots , p-1$}, is obtained by the following sequence of moves: first we perform an $R_{7}$ move on the two overpasses corresponding to the points $+i$ and $-i$,  then we repeat $k-1$ times the three moves $R_{6}$-$R_{4}$-$R_{1}$ necessary to retract the small arc having the endpoints with the same sign (see an example in Figure~\ref{R7-R4}).

\begin{figure}[h!]\vspace{-10mm}
\begin{center}                         
\includegraphics[width=8.8cm]{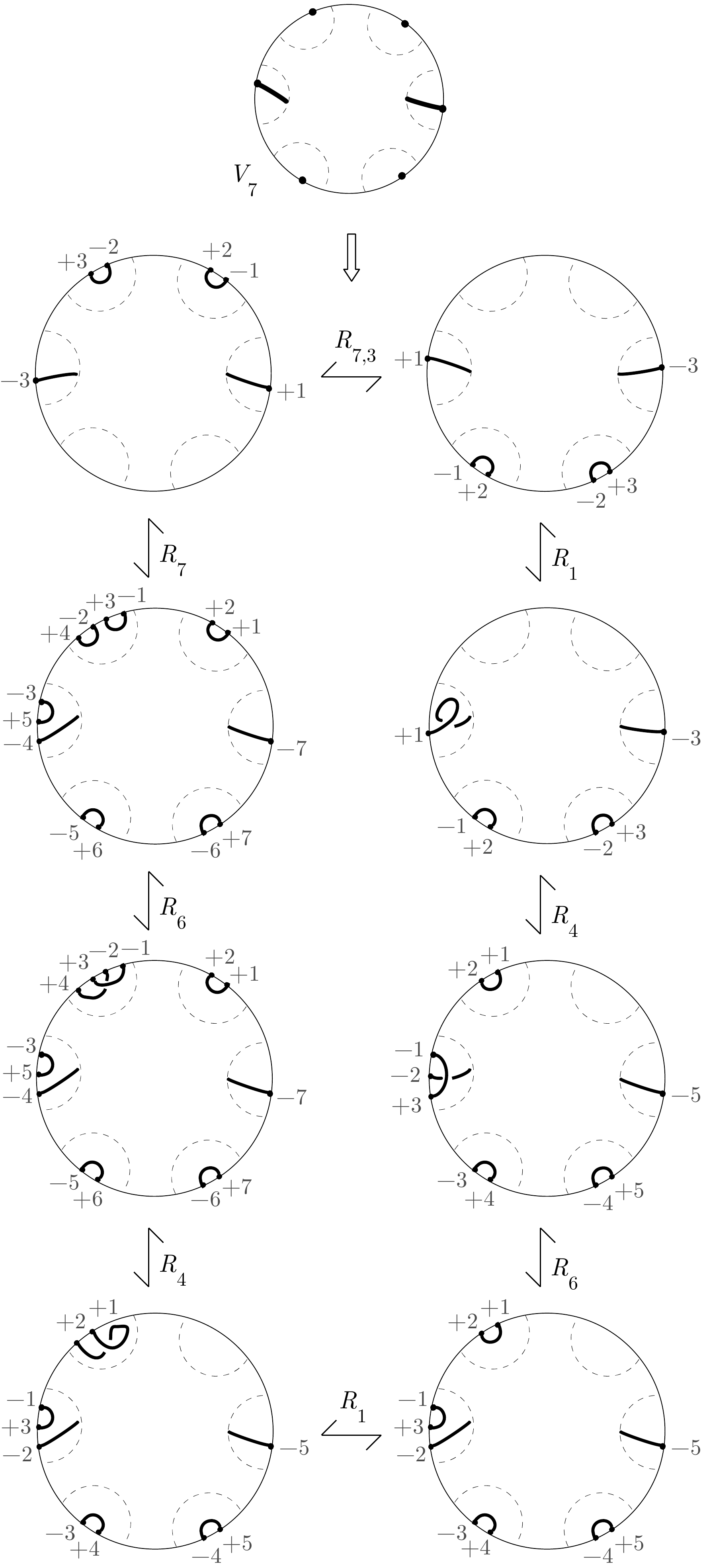}
\caption[legenda elenco figure]{How to decompose a move $R_{7,3}$.}\label{R7-R4}
\end{center}
\end{figure}

So we can drop out $R_{7,2}, \ldots, R_{7, p-1}$ from the set of moves and keep only $R_{7,1}=R_{7}$.
As a consequence, any pair of diagrams of two equivalent links can be joined by a finite sequence of generalized Reidemeister moves $R_{1}, \ldots, R_{7}$ and diagram isotopies. When $p=2$, it is easy to see that $R_{6}$ coincides with $R_{5}$, and $R_{7}$ is a trivial move; so in this case moves $R_{1}, \ldots, R_{5}$ are sufficient (see also \cite{Dr}).
\end{proof}

Diagram isotopies have to respect the identifications of boundary points of the link projection. Therefore, move $R_{6}$ is  possible only if there are no other arcs inside the small circles of the move $R_{6}$, as depicted in Figure \ref{vR1-R7}.
For example, Figure \ref{noR6} shows the case of a link in $L(3,1)$ where the $R_{6}$ move removing the crossing cannnot be performed.

\begin{figure}[h!]
\begin{center}                         
\includegraphics[width=5.5cm]{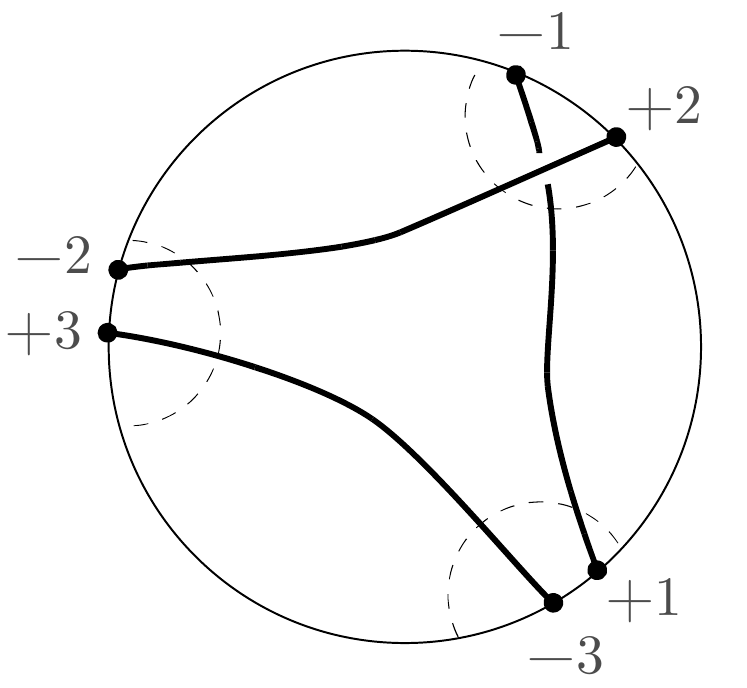}
\caption[legenda elenco figure]{A forbidden $R_6$ move.}\label{noR6}
\end{center}
\end{figure}

\end{section}


\begin{section}{Fundamental group}
In this section we obtain, directly from the diagram, a finite presentation for the fundamental group of the complement of links in $L(p,q)$.

Let $L$ be a link in $L(p,q)$, and consider a diagram of $L$. Fix an orientation for $L$, which induces an orientation on both $L'$ and $\mathbf{p}(L')$. Perform an $R_{1}$ move on each overpass of the diagram having both endpoints on the boundary of the disk; in this way every overpass has at most one boundary point. Then label the overpasses as follows: $A_{1}, \ldots, A_{t}$ are the ones ending in the upper hemisphere, namely in $+1, \ldots, +t$, while $A_{t+1}, \ldots, A_{2t}$ are the overpasses ending in $-1, \ldots, -t$. The remaining overpasses are labelled by $A_{2t+1},\ldots, A_{r}$.
For each $i=1 \ldots, t$, let $\epsilon_{i}=+1$ if, according to the link orientation, the overpass $A_{i}$ starts from the point $+i$; otherwise, if $A_{i}$ ends in the point $+i$, let $\epsilon_{i}=-1$.

\begin{figure}[h!]                      
\begin{center}                         
\includegraphics[width=12cm]{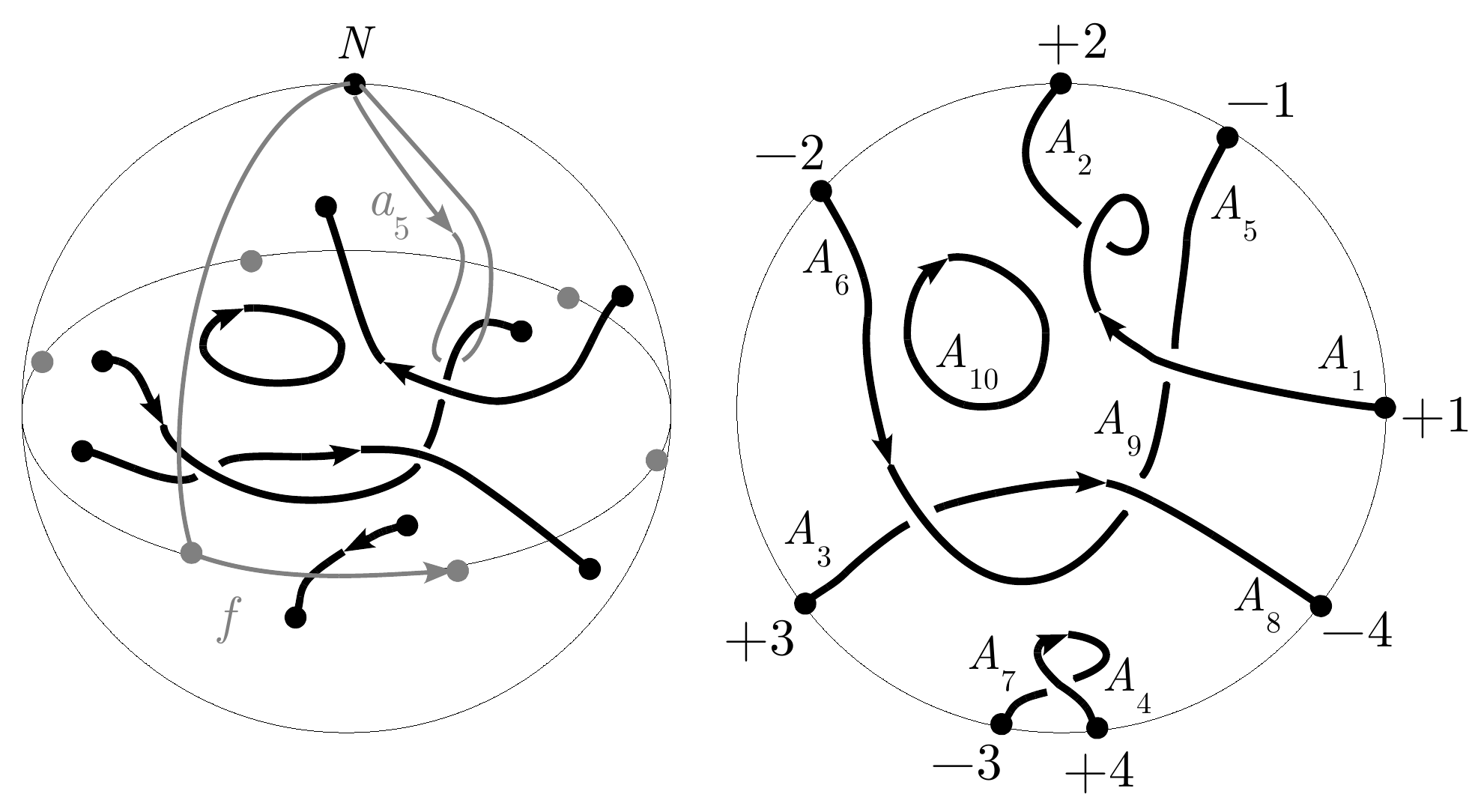}
\caption[legenda elenco figure]{Example of overpasses labelling for a link in $L(6,1)$.}\label{labelling}
\end{center}
\end{figure}

Associate to each overpass $A_{i}$ a generator $a_{i}$, which is a loop around the overpass as in the classical Wirtinger theorem, oriented following the left hand rule. Moreover let $f$ be the generator of the fundamental group of the lens space depicted in Figure~\ref{labelling}.
The relations are the following:
\begin{description}
\item[W:] $w_{1},\ldots , w_{s}$ are the classical Wirtinger relations for each crossing, that is to say $a_{i}a_{j}a_{i}^{-1}a_{k}^{-1}=1$ or $ a_{i}a_{j}^{-1}a_{i}^{-1}a_{k}=1$, according to Figure~\ref{wirtrel};
\begin{figure}[h!]                      
\begin{center}                         
\includegraphics[width=8cm]{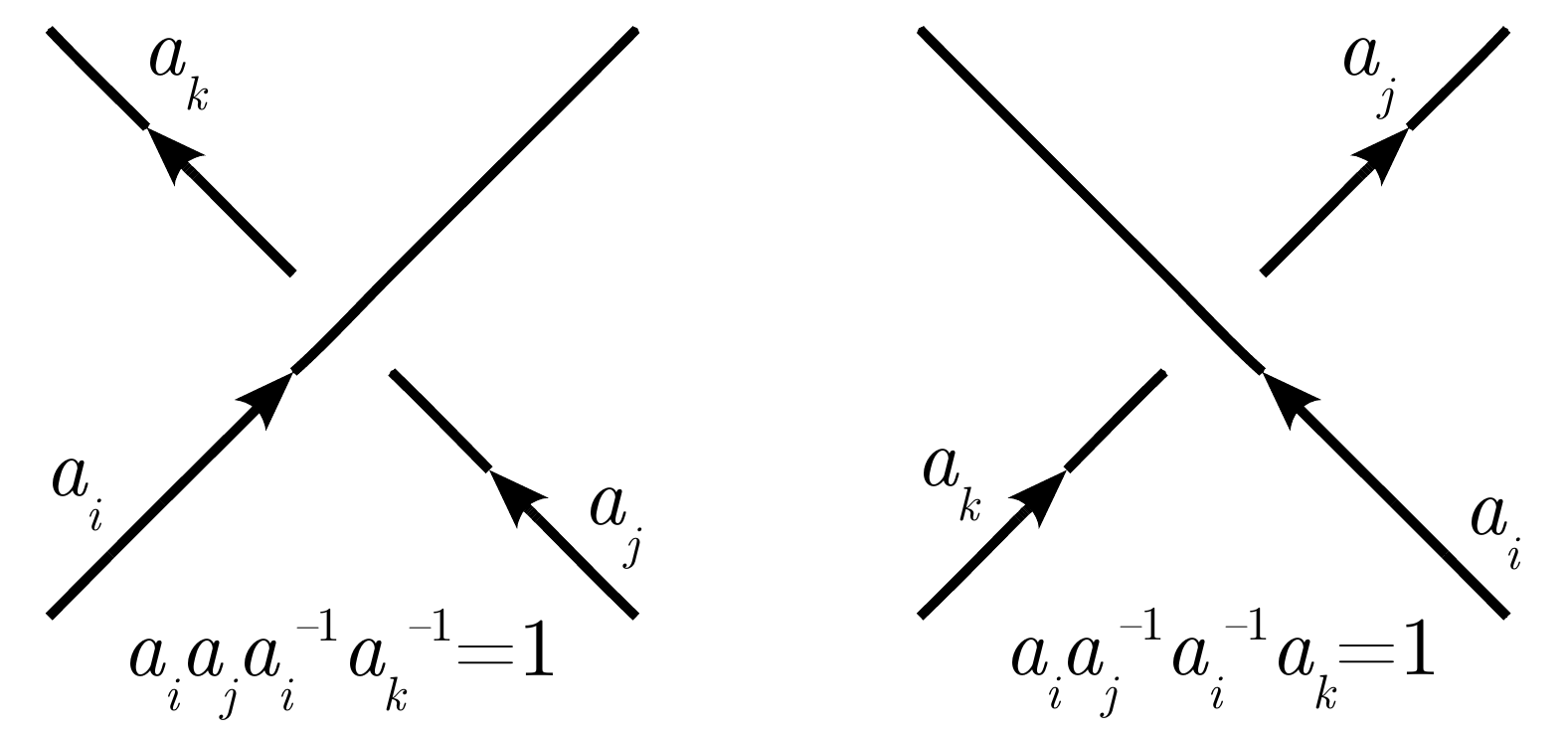}
\caption[legenda elenco figure]{Wirtinger relations.}\label{wirtrel}
\end{center}
\end{figure}
\vspace{-4mm}
\item[L:] $\ l$ is the lens relation $ a_{1}^{\epsilon_{1}} \cdots a_{t}^{\epsilon_{t}} = f^{p}$;
\item[M:] $m_{1}, \ldots, m_{t}$ are relations (of conjugation) between loops corresponding to overpasses with identified endpoints on the boundary. If $t=1$ the relation is $a_{2}^{\epsilon_{1}}=a_{1}^{-\epsilon_{1}}f^{q} a_{1}^{\epsilon_{1}}f^{-q} a_{1}^{\epsilon_{1}}$. Otherwise, consider the point $-i$ and, according to equator orientation, let $+j$ and $+j+1$ (mod $t$) be the type $+$ points aside of it.
We distinguish two cases:
\begin{itemize}
\item if $-i$ lies on the diagram between $-1$ and $+1$, then the relation $m_{i}$ is
\begin{multline*}
a_{t+i}^{\epsilon_{i}}= \big( \prod_{k=1}^{j} a_{k}^{\epsilon_{k}} \big)^{-1}  f^{q} \big( \prod_{k=1}^{i-1} a_{k}^{\epsilon_{k}} \big) \  a_{i}^{\epsilon_{i}} \
 \big( \prod_{k=1}^{i-1} a_{k}^{\epsilon_{k}} \big)^{-1} f^{-q} \big( \prod_{k=1}^{j} a_{k}^{\epsilon_{k}} \big); 
\end{multline*} 
\item otherwise, the relation $m_{i}$ is
\begin{multline*}
a_{t+i}^{\epsilon_{i}}= \big( \prod_{k=1}^{j} a_{k}^{\epsilon_{k}} \big)^{-1}  f^{q-p} \big( \prod_{k=1}^{i-1} a_{k}^{\epsilon_{k}} \big) \  a_{i}^{\epsilon_{i}} \
 \big( \prod_{k=1}^{i-1} a_{k}^{\epsilon_{k}} \big)^{-1} f^{p-q} \big( \prod_{k=1}^{j} a_{k}^{\epsilon_{k}} \big).
\end{multline*} 
\end{itemize}
\end{description}

\begin{teo} \label{lpqio}
Let $\ast=F(N)$, then the group of the link $L \subset L(p,q)$ is:
$$
 \pi_{1} (L(p,q) \smallsetminus L, \ast)=  \langle a_{1}, \ldots ,a_{r}, f \ 
 | \  w_{1},\ldots , w_{s}, 
 l, m_{1},\ldots,m_{t} \rangle .
$$
 \end{teo}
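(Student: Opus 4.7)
My plan is a two-stage Wirtinger-type computation, mirroring the classical argument in $\s{3}$.

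For the first stage, I would compute $\pi_{1}(B^{3}\setminus L')$, where $L'=F^{-1}(L)$ is a disjoint union of properly embedded arcs and closed curves in the simply connected ball $B^{3}$. The same deformation-retract argument that proves Wirtinger's theorem in $\s{3}$ applies verbatim: collapse $B^{3}\setminus L'$ onto a 2-complex whose 1-skeleton is a wedge of meridians $a_{1},\ldots ,a_{r}$ around the overpasses (oriented by the left-hand rule from $N$) and whose 2-cells correspond to the regions of the diagram, reading off the relations $w_{j}$ at each crossing. One obtains
$$
\pi_{1}(B^{3}\setminus L')=\langle a_{1},\ldots ,a_{r}\mid w_{1},\ldots ,w_{s}\rangle ;
$$
the endpoints of proper arcs on $\partial B^{3}$ contribute no further relations at this stage.

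For the second stage I would apply Seifert--van Kampen to a decomposition $L(p,q)\setminus L=U\cup V$, where $U$ is an open neighborhood of the image $F(\partial B^{3})\setminus L$ and $V$ is an open collar of the interior of $B^{3}\setminus L'$ whose fundamental group reproduces the Wirtinger presentation above. Attaching $U$ introduces the single new generator $f$, which generates $\pi_{1}(L(p,q))=\mathbb{Z}/p$ and is represented by a loop transverse to $F(\partial B^{3})$. The lens relation $l:a_{1}^{\epsilon_{1}}\cdots a_{t}^{\epsilon_{t}}=f^{p}$ is then read off from the disk $E_{+}\subset L(p,q)$: its boundary (the image of $\partial B_{0}^{2}$) is homotopic to $f^{p}$ because $\partial B_{0}^{2}\to\partial B_{0}^{2}/\langle g_{p,q}\rangle$ is a $p$-fold cover, and $E_{+}$ meets $L'$ exactly at the points $+1,\ldots ,+t$, contributing the meridian word $a_{1}^{\epsilon_{1}}\cdots a_{t}^{\epsilon_{t}}$ (the signs $\epsilon_{i}$ come from the orientation convention on the overpasses). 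Each relation $m_{i}$ records the fact that the overpasses $A_{i}$ and $A_{t+i}$, whose endpoints are paired by $+i\sim -i$, belong to a single arc of $L$ in $L(p,q)$, so their meridians must be conjugate in $\pi_{1}(L(p,q)\setminus L)$.

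The main obstacle is the explicit bookkeeping for the $m_{i}$ relations: one must compute the specific conjugating word relating $a_{t+i}^{\epsilon_{i}}$ to $a_{i}^{\epsilon_{i}}$. Geometrically, one chooses a path in $L(p,q)\setminus L$ from a basepoint near $+i$ to one near $-i$ that goes through the lens-space generator; the $g_{p,q}$-twist built into the identification of $E_{+}$ with $E_{-}$ contributes the factors $f^{\pm q}$, while the detours around the overpasses whose upper endpoints $+1,\ldots ,+(i-1)$ and $+1,\ldots ,+j$ are met along the way produce the products $\prod_{k=1}^{i-1}a_{k}^{\epsilon_{k}}$ and $\prod_{k=1}^{j}a_{k}^{\epsilon_{k}}$. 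The two cases in the statement correspond to the two homotopically distinct ways of routing this path around the equator, selected by whether $-i$ lies in the arc from $-1$ to $+1$ (giving $f^{q}$) or on the opposite side (giving $f^{q-p}$). Once these conjugating words are verified, the amalgamated-product formula of Seifert--van Kampen applied to $U\cup V$ yields the claimed presentation.
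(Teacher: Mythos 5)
Your proposal is correct and follows essentially the same route as the paper: a Seifert--van Kampen decomposition into an inner ball carrying the classical Wirtinger presentation $\langle a_{1},\ldots,a_{r}\mid w_{1},\ldots,w_{s}\rangle$ and a neighborhood of the identified boundary sphere $\partial B^{3}/\sim$, whose CW-structure produces the generator $f$, the lens relation $a_{1}^{\epsilon_{1}}\cdots a_{t}^{\epsilon_{t}}=f^{p}$ from the hemispherical $2$-cell, and the conjugation relations $m_{i}$ from matching the meridians at the paired endpoints $+i\sim -i$. The paper realizes your $U$ and $V$ concretely by cutting along a concentric sphere $\s{2}_{\varepsilon}$ chosen so that all underpasses lie in its interior (and joins the basepoint $F(N)$ to both pieces by an arc), but the argument is the same.
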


\begin{proof}
Suppose that $L'=F^{-1}(L)$ is such that $\mathbf{p}_{|L'}: L' \rightarrow B_{0}^{2}$ is a regular projection. Consider a sphere $\s{2}_{\varepsilon}$ of radius $1-\varepsilon$, with $0< \varepsilon < 1$; this sphere splits the $3$-ball $B^{3}$ into two parts: call $B_{\varepsilon}^{3}$ the internal one and  $E_{\varepsilon}$ the external one.
Choose $\varepsilon$ small enough such that all the underpasses belong into $\text{int} (B_{\varepsilon}^{3})$. Let $N_{\varepsilon}$ be the north pole of $B_{\varepsilon}^{3}$, and consider $ \tilde{\mathbf{S}}^{2}_{\varepsilon} = \s{2}_{\varepsilon} \cup\overline{N N_{\varepsilon}} $.

In order to compute $\pi_{1}(L(p,q) \smallsetminus L,\ast)$, we apply Seifert-Van Kampen theorem with decomposition  $(L(p,q) \smallsetminus L)=(F(\tilde B^{3}_{\varepsilon}) \smallsetminus  L) \cup (F(E_{\varepsilon}) \smallsetminus L)$.

The fundamental group of $F(\tilde B^{3}_{\varepsilon}) \smallsetminus  L$ can be obtained as in the classical Wirtinger Theorem:
$$ \pi_{1}(F(\tilde B^{3}_{\varepsilon}) \smallsetminus  L, \ast)= \langle a_{1}, \ldots , a_{r} \ | \ w_{1}, \ldots ,w_{s} 
  \rangle.$$
  
For $F(E_{\varepsilon}) \smallsetminus L$, we proceed in the following way: first of all observe that we can retract $F(E_{\varepsilon})  \smallsetminus L $ to $E \smallsetminus L$, where $E$ is $\partial B^{3} / \sim$.
According to the orientation, fix a point $T_{1}$ in $\partial B^{2}_{0}$ just before $+1$ and such that its equivalent points $T_{2}, \ldots, T_{p}$ (via $\sim$) do not belong to $\mathbf{p}(L')$. 
\begin{figure}[h!]                  
\begin{center}                            
 \includegraphics[width=10cm]{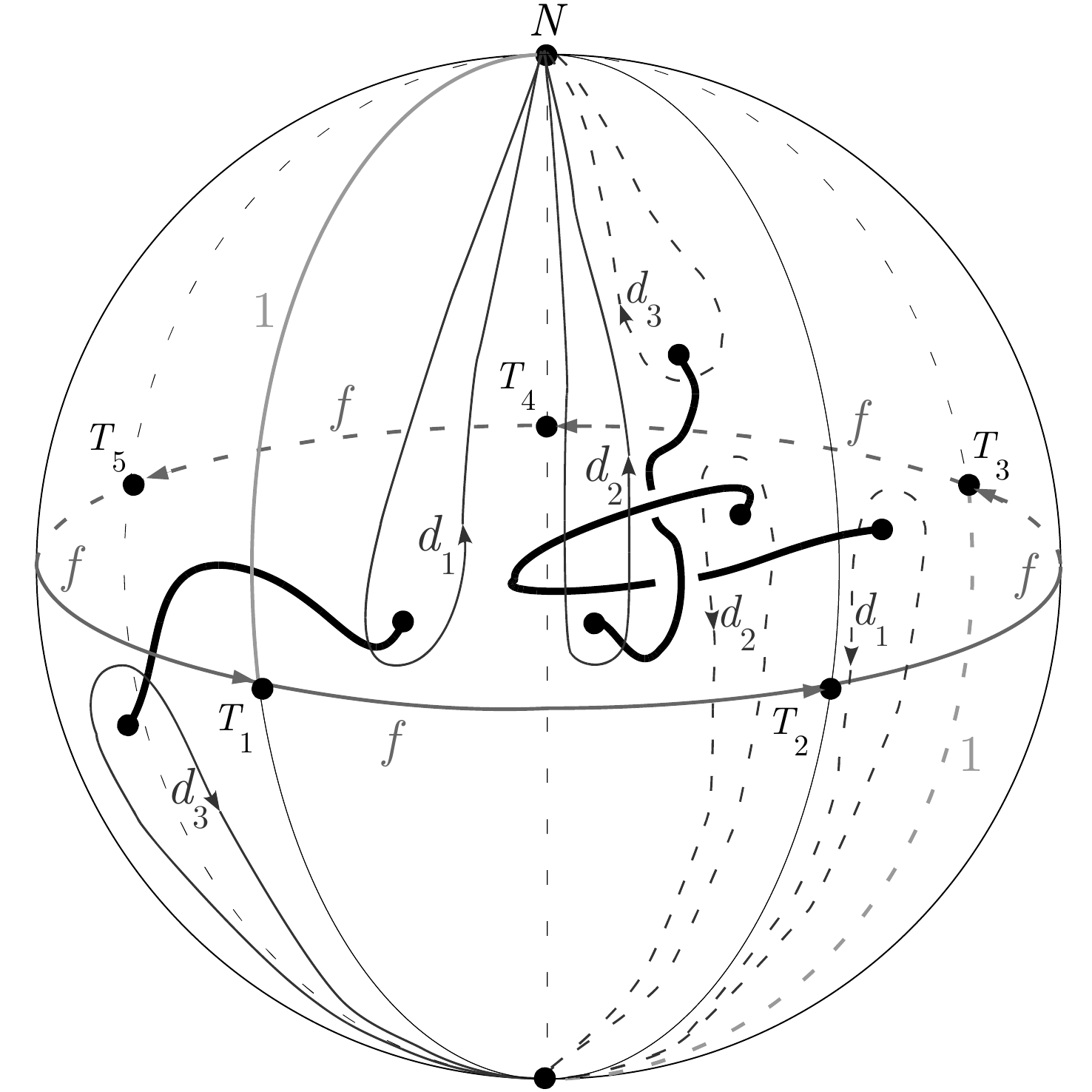}
\caption[legenda elenco figure]{Boundary complex for a knot in $L(5,2)$.}\label{LCW} 
\end{center}
\end{figure}
Following the example of Figure~\ref{LCW}, the $2$-complex $E$ is a CW-complex composed by: two $0$-cells $N=S$ and $T_{1}=T_{2}=\ldots =T_{p}$, two $1$-cells $\wideparen{NT_{1}}$ (chosen as a maximal tree in the 1-skeleton) and $\wideparen{T_{1}T_{2}}$ (corresponding to $f$), and one $2$-cell, that is the upper hemisphere.
In order to obtain $\pi _{1} (E \smallsetminus L, \ast)$, we need to add the loops $d_{1}, \ldots, d_{t}$ around the points of $L$.
The relation given by the $2$-simplex is $d_{1} \cdots d_{t}=f^{p}$.
Hence the fundamental group of $E \smallsetminus L$ is:
\begin{equation}\label{DFref}
 \pi_{1}(E \smallsetminus L  , \ast)= \langle d_{1}, \ldots , d_{t}, f \ | \ d_{1} \cdots d_{t}=f^{p} \, \rangle. 
\end{equation}

\begin{figure}[h!]                      
\begin{center}                         
\includegraphics[width=10cm]{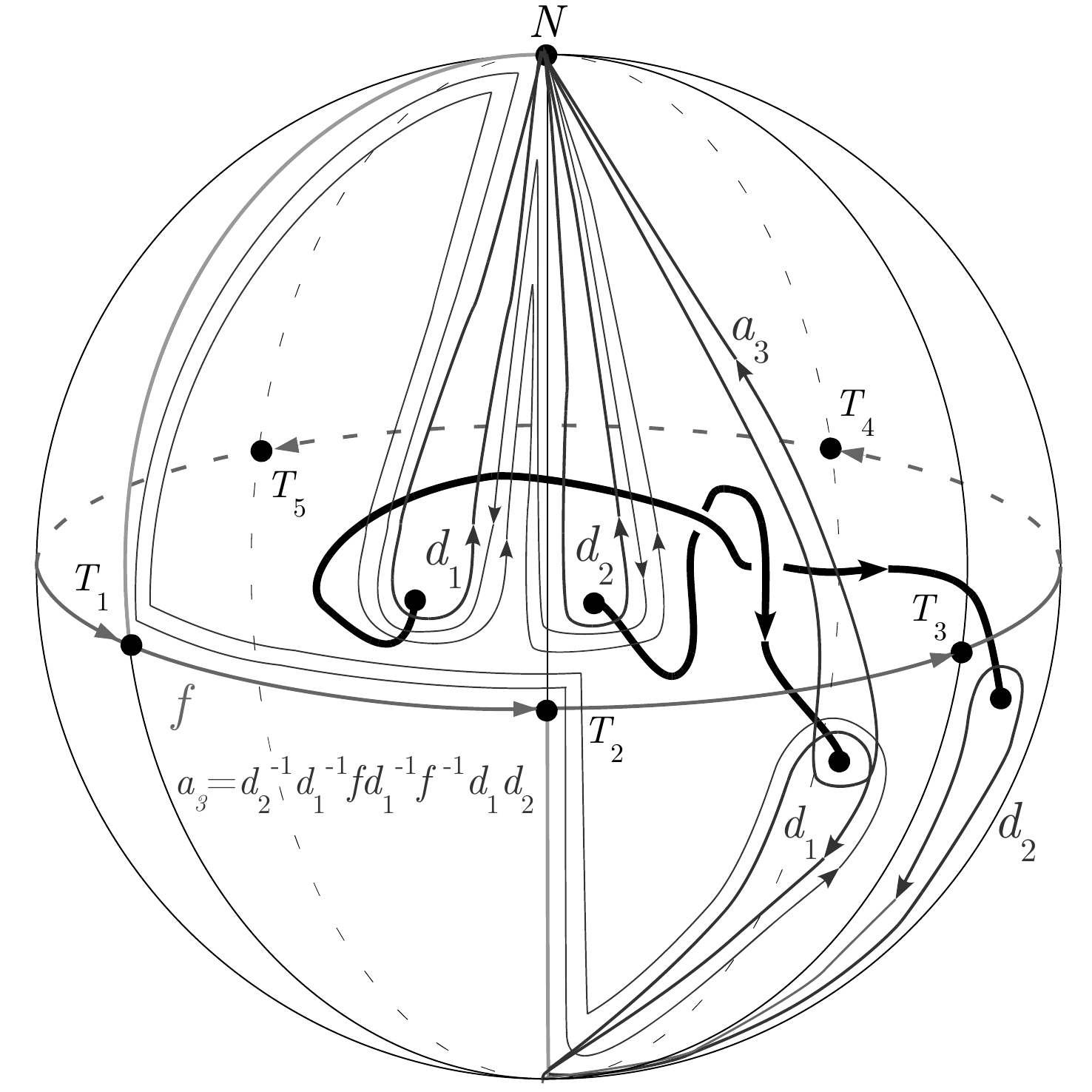}
\caption[legenda elenco figure]{Example of relation for a link in $L(5,1)$.}\label{Irel}
\end{center}
\end{figure}

Finally, the fundamental group of $F(\tilde{\bold{S}}^{2}_{\varepsilon}) \smallsetminus L = (F(\tilde B^{3}_{\varepsilon}) \smallsetminus L) \cap (F(E_{\varepsilon}) \smallsetminus L)$ is generated by $a_{1}, \ldots, a_{2t}$.
By Seifert-Van Kampen theorem, we identify each $a_{1}, \ldots, a_{t}$ with the corresponding generator $d_{1}, \ldots, d_{t}$, according to orientation:
$a_{i}^{\epsilon_{i}}=d_{i}$.
Furthermore we need to identify $a_{t+1}, \ldots a_{2t}$ with suitable loops in the CW-complex, distinguishing two cases:

\begin{description}
\item[-] if $-i$ lies on the diagram between $-1$ and $+1$, then we obtain the following relation (see Figure~\ref{Irel} for an example)
$$
\quad a_{t+i}^{\epsilon_{i}}= \big( \prod_{k=1}^{j} d_{k} \big)^{-1}  f^{q} \big( \prod_{k=1}^{i-1} d_{k} \big) \  d_{i} \
 \big( \prod_{k=1}^{i-1} d_{k} \big)^{-1} f^{-q} \big( \prod_{k=1}^{j} d_{k} \big); 
$$
\item[-] otherwise, the relation is
$$
a_{t+i}^{\epsilon_{i}}= \big( \prod_{k=1}^{j} d_{k} \big)^{-1}  f^{q-p} \big( \prod_{k=1}^{i-1} d_{k} \big) \  d_{i} \
 \big( \prod_{k=1}^{i-1} d_{k} \big)^{-1} f^{p-q} \big( \prod_{k=1}^{j} d_{k} \big). 
$$
\end{description}
At last we remove $d_{1}, \ldots, d_{t}$ from the group presentation, obtaining:
$$
 \pi_{1} (L(p,q) \smallsetminus L, \ast)=  \langle a_{1}, \ldots ,a_{r}, f \ 
 | \  w_{1},\ldots , w_{s}, 
 l, m_{1},\ldots,m_{t}  \rangle . \qedhere
$$
\end{proof}

In the special case of $L(2,1)=\rp{3}$, the presentation is equivalent (via Tietze transformations) to the one given in \cite{HL}.

\begin{oss}
\label{ridotto} 
If the link diagram does not contain overpasses which are circles (we can avoid this case by using suitable $R_{1}$ moves), then the presentation of Theorem~\ref{lpqio} is balanced (i.e., the number of generators equals the number of relations). Indeed, it is enough to think at each intersection between the diagram and the boundary disk as a fake crossing. Moreover, the product of the Wirtinger relators represents a loop that is trivial in $\pi_{1}(E \smallsetminus   L , \ast)$, so anyone of the Wirtinger relations can be deduced from the others, obtaining a presentation of deficiency one.
\end{oss}

\end{section}


\begin{section}{First homology group}

In this section we show how to determine, directly from the diagram, the first homology group of links in $L(p,q)$, which is useful for the computation of twisted Alexander polynomials.

Consider a diagram of an oriented knot $K \subset L(p,q)$ and let $\epsilon_i$ be as defined in the previous section. If $n_{1}=| \{ \epsilon_{i}  \ | \ \epsilon_{i} =+1 ,\ i= 1, \ldots, t\} |$ and $n_{2}=| \{ \epsilon_{i}  \ | \ \epsilon_{i} =-1, \  i= 1, \ldots, t\}|$, define $\delta_{K}=q(n_{2}-~n_{1}) \mod p$.

\begin{lem}\label{lemhom}
If $K \subset L(p,q)$ is an oriented knot and $[K]$ is the homology class of $K$ in $H_{1}(L(p,q))$, then $[K]=\delta_{K}$.
\end{lem}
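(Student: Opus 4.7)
The plan is to compute $[K]$ by pulling back $K$ to its preimage $L' = F^{-1}(K) \subset B^3$ and exploiting the contractibility of $B^3$, so that all nontrivial information sits in how the boundary endpoints of $L'$ are identified.

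First, orient $L'$ consistently with $K$. At each pair $F(+i) = F(-i)$, continuity of $K$ forces exactly one arc of $L'$ to meet $+i$ and one to meet $-i$, with orientations coordinated by the sign $\epsilon_i$, giving
$$\partial L' = \sum_{i=1}^{t} \epsilon_i\,(e_{-i} - e_{+i}).$$
To kill this boundary I introduce for each $i$ a reference arc $\sigma_i \subset B^3$ from $+i$ to $-i$ running along $\partial B^3$: from $+i \in E_+$ drop along a meridian of $E_+$ to the equator point $T_{+i}$ directly below $+i$; traverse the equator through angle $2\pi q/p$ to the point $T_{-i}$ directly below $-i$ (the longitudes of $+i$ and $-i$ differ by exactly this angle, since $-i = f_3 \circ g_{p,q}(+i)$); then rise along a meridian of $E_-$ to $-i$. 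Since $\partial \sigma_i = e_{-i} - e_{+i}$, the chain $L' - \sum_i \epsilon_i \sigma_i$ is a 1-cycle in the contractible ball $B^3$ and hence bounds; applying $F_\ast$ yields
$$[K] = \sum_{i=1}^{t} \epsilon_i\, [F_\ast \sigma_i] \quad \text{in } H_1(L(p,q)).$$

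Finally I evaluate $[F_\ast \sigma_i]$. The identification $f_3 \circ g_{p,q}$ carries the meridian from $+i$ to $T_{+i}$ on $E_+$ onto the meridian from $-i$ to $T_{-i}$ on $E_-$, so the two meridian portions of $\sigma_i$ descend in $\Sigma = F(\partial B^3)$ to mutually inverse paths, and $F_\ast \sigma_i$ is freely homotopic there to the image of the equator segment. Under $F$ the equator $\partial B^2_0$ is the $p$-to-1 cover of its image whose $p$ angularly adjacent unit 1-cells are all identified with $f$, so the segment of angular length $2\pi q/p$ realises $\pm q\cdot f$ in $\pi_1(\Sigma)$. Summing over $i$ then gives $[K] = \pm q\sum_i \epsilon_i \cdot f = \pm q(n_1 - n_2)\,f$; reconciling the sign with the conventions fixing the orientations of $f$ and of each $\epsilon_i$ (in particular the direction in which $g_{p,q}$ rotates relative to the chosen direction of $f$ in Figure~\ref{labelling}) produces $[K] = q(n_2-n_1)\,f = \delta_K$ in $H_1(L(p,q)) \cong \mathbb{Z}/p$.

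The only real obstacle is this sign bookkeeping at the end; the topology of the argument is straightforward, because the contractibility of $B^3$ reduces everything to a chain-level identity.
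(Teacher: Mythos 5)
Your argument is correct and is essentially the paper's own proof: the authors likewise close up $F^{-1}(K)$ with auxiliary arcs $\gamma_i$ joining the identified boundary points (your $\sigma_i$), note that the resulting union is null-homologous because it comes from closed curves in the contractible ball, and evaluate each closing arc as $\epsilon_i q$ times the generator $f$, yielding $[K]=q(n_2-n_1)=\delta_K$. Your version merely makes explicit, at the chain level, the choice of $\gamma_i$ and the computation of its class, which the paper leaves to a figure and an ``of course''.
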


\begin{proof} Let $f$ be the generator of $H_{1}(L(p,q)) =\mathbb{Z}_{p}$, as depicted in Figure \ref{hom}. 
Let $K\cap(\partial B^3/\sim )=\{P_1,\ldots,P_t\}$. For $i=1,\ldots,t$, consider the identification class $[P_i]_{\sim}=\{P'_i,P''_i\}$, with $P'_i\in E_+$ and $P''_i\in E_-$. Denote with $\gamma_i$ the path (actually a loop in $L(p,q)$) connecting $P'_i$ with $P''_i$ as in Figure~\ref{hom}, oriented as depicted if $\epsilon_i=+1$ and in the opposite direction if $\epsilon_i=-1$. Of course its homology class is $[\gamma_i]=\epsilon_i q$. The loop $K'=K\cup\gamma_1\cup\cdots\cup\gamma_t$ is homologically trivial, so we have: $0=[K']=[K]+\sum_{i=1}^t[\gamma_i]=[K]+(n_1-n_2)q$, and therefore $[K]=\delta_K$.

\begin{figure}[h!]                      
\begin{center}                         
\includegraphics[width=6.8cm]{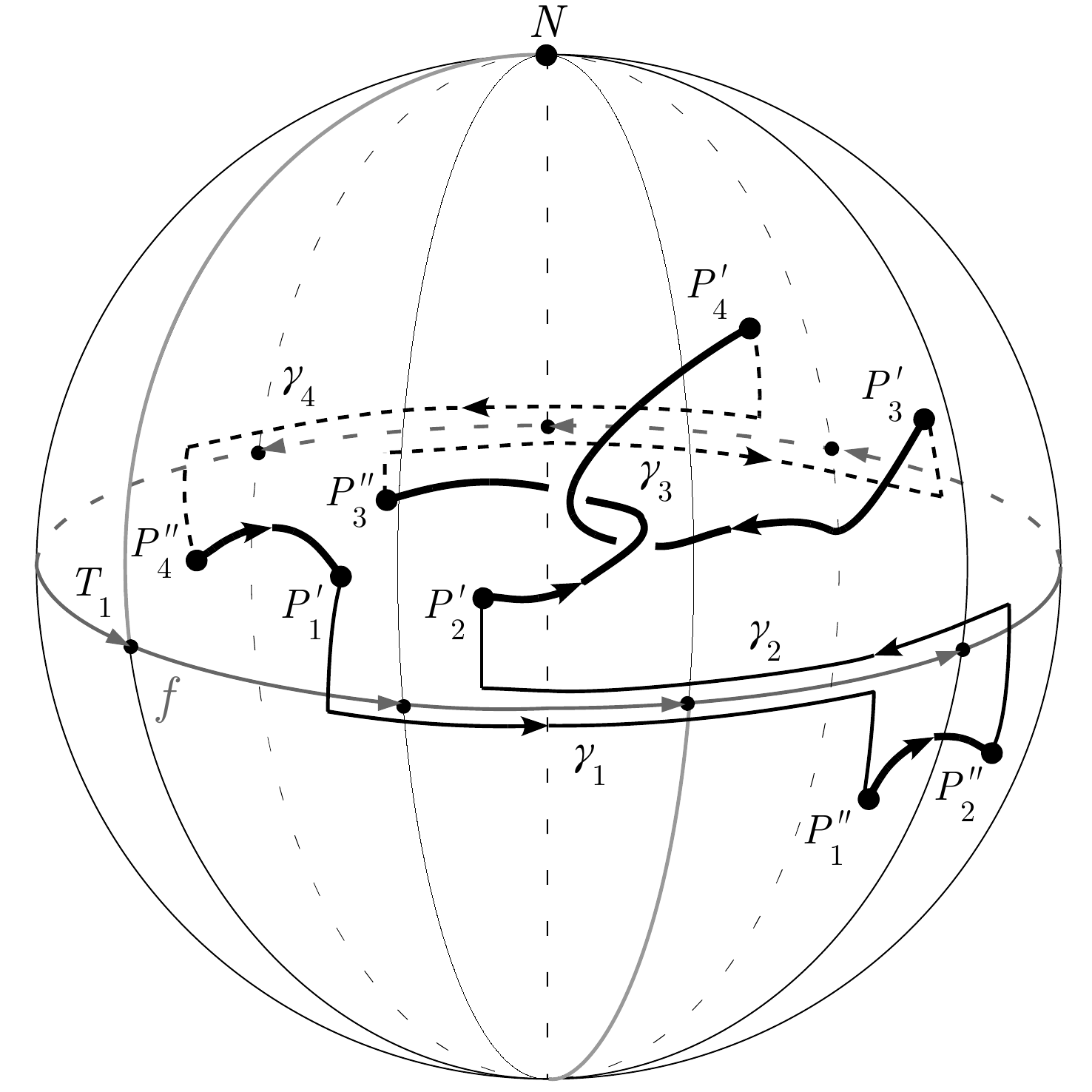}
\caption[legenda elenco figure]{Equatorial arcs for a knot in $L(7,2)$.}\label{hom}
\end{center}
\end{figure}
\end{proof}

\vbox{
\begin{cor}\label{homology}
Let $L$ be a link in $L(p,q)$, with components $L_{1}, \ldots L_{\nu}$.
For each $j=1, \ldots, \nu$, let $\delta_{j} =[L_{j}] \in \mathbb{Z}_{p}=H_{1}(L(p,q))$.
Then $$ H_{1}( L(p,q) \smallsetminus L)  \cong \mathbb{Z}^{\nu} \oplus \mathbb{Z}_{d},$$ where $d=\gcd( \delta_{1}, \ldots , \delta_{\nu}, p)$.
\end{cor}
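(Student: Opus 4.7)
The plan is to abelianize the Wirtinger-type presentation of $\pi_1(L(p,q)\smallsetminus L,\ast)$ given by Theorem~\ref{lpqio} and identify the resulting cokernel. First I would note that each Wirtinger relation $w_s$ abelianizes to an equality $a_j=a_k$ between the two underpass generators; similarly, each conjugation relation $m_i$ abelianizes to $a_{t+i}=a_i$, because the conjugating products $\prod a_k^{\epsilon_k}$ and the powers of $f$ on either side of $a_i^{\epsilon_i}$ cancel once we commute. Walking along any component $L_k$ of $L$, each transition between consecutive overpasses happens either at a crossing (giving a Wirtinger identification) or at a boundary identification $+i\sim-i$ (giving an $m_i$ identification). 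Thus after abelianization all generators $a_i$ coming from overpasses of $L_k$ collapse to a single class $x_k\in H_1(L(p,q)\smallsetminus L)$, leaving the generators $x_1,\ldots,x_\nu,f$.

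Next I would rewrite the lens relation $\ell$ in terms of the $x_k$'s. Substituting $a_i=x_{\mathrm{cp}(i)}$ in $\sum_{i=1}^{t}\epsilon_i a_i=pf$ and collecting by component yields
\[
c_1 x_1+\cdots+c_\nu x_\nu = pf,\qquad c_k=\sum_{\substack{1\le i\le t\\ A_i\subset L_k}}\epsilon_i.
\]
Applying Lemma~\ref{lemhom} componentwise to each $L_k$ gives $c_k=n_1^{(k)}-n_2^{(k)}$ and $\delta_k\equiv -q\,c_k\pmod p$. Hence $H_1(L(p,q)\smallsetminus L)$ is the cokernel of the $1\times(\nu+1)$ integer matrix $(c_1,\ldots,c_\nu,-p)$.

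Finally, the Smith normal form of a single-row matrix is $\bigl(\gcd(c_1,\ldots,c_\nu,p),0,\ldots,0\bigr)$, so the cokernel is $\mathbb{Z}^\nu\oplus\mathbb{Z}_{d'}$ with $d'=\gcd(c_1,\ldots,c_\nu,p)$. Since $\gcd(p,q)=1$, multiplication by $q$ is a bijection on $\mathbb{Z}_p$, whence $d'=\gcd(q c_1\bmod p,\ldots,q c_\nu\bmod p,p)=\gcd(\delta_1,\ldots,\delta_\nu,p)=d$, yielding the claimed isomorphism. The main obstacle is the first step: one must verify carefully that each $m_i$ relation reduces, upon abelianization, to $a_i=a_{t+i}$, and that combining the Wirtinger and $m_i$ identifications really does collapse all overpasses of a single component to one class; the concluding Smith-form and number-theoretic steps are routine.
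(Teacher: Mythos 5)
Your proposal is correct and follows essentially the same route as the paper: abelianize the Wirtinger-type presentation of Theorem~\ref{lpqio}, observe that the W and M relations collapse all overpass generators of a component to a single class, reduce the lens relation to $\sum_k \tilde\delta_k x_k = pf$ with $\tilde\delta_k=\sum_{A_i\subset L_k}\epsilon_i$, and convert $\gcd(\tilde\delta_1,\ldots,\tilde\delta_\nu,p)$ to $\gcd(\delta_1,\ldots,\delta_\nu,p)$ via Lemma~\ref{lemhom} and $\gcd(p,q)=1$. Your explicit Smith-normal-form step and the verification that each $m_i$ abelianizes to $a_{t+i}=a_i$ merely spell out details the paper leaves implicit.
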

}

\begin{proof} 
We abelianize the fundamental group presentation given in Section~4. Relations of type W and M imply that generators corresponding to the same link component are homologous. So $ H_{1}( L(p,q) \smallsetminus L) $ is generated by $g_{1}, \ldots , g_{\nu}$, which are generators corresponding to the link components, and $f$. Relation L becomes: $pf-( \tilde \delta_{1} g_{1}+ \ldots + {\tilde \delta_{\nu}}  g_{\nu}) =0 $, with $\tilde \delta_{j}= \sum_{A_{h} \subset L_{j}} \epsilon_{h}$, where $L_j$ is the $j$-th component of $L$. Therefore $ H_{1}( L(p,q) \smallsetminus L)  \cong \mathbb{Z}^{\nu} \oplus \mathbb{Z}_{d}$, where $d=\gcd( \tilde \delta_{1}, \ldots , \tilde \delta_{\nu}, p)$. Since $\gcd(p,q)=1$ and, by Lemma~\ref{lemhom}, $\delta_{j}=-q\tilde\delta_{j}$,
we obtain $ d= \gcd(\tilde \delta_{1}, \ldots,\tilde  \delta_{\nu}, p)=\gcd(\delta_{1}, \ldots, \delta_{\nu}, p). $
\end{proof}

\end{section}


\begin{section}{Twisted Alexander polynomials}

In this section we analyze the twisted Alexander polynomials of links in lens spaces and their relationship with Reidemeister torsion. Start by recalling the definition of twisted Alexander polynomials (for further references see \cite{T}). 
Given  a finitely generated  group $\pi$, denote with \hbox{$H=\pi/\pi'$} its abelianization and let $G=H/\textup{Tors}(H)$. Take a presentation  \hbox{$\pi=\langle x_1,\ldots,x_m\mid r_1\ldots,r_n\rangle$} and consider the Alexander-Fox matrix $A$ associated to the presentation, that is $A_{ij}=\textup{pr}(\frac{\partial r_i}{\partial x_j})$, where $\textup{pr}$ is the natural projection $\mathbb Z[F(x_1,\ldots,x_m)]\to\mathbb Z[\pi]\to\mathbb Z[H]$ and $\frac{\partial r_i}{\partial x_j}$ is the Fox derivative of $r_i$. Moreover let $E(\pi)$ be the  first elementary ideal of $\pi$, which is the ideal of $\mathbb Z[H]$ generated by the \mbox{$(m-1)$-minors} of $A$.   For each  homomorphism $\sigma:\textup{Tors}(H)\to \mathbb C^*=\mathbb C \smallsetminus \{ 0 \}$ we can define a twisted Alexander polynomial $\Delta^{\sigma}(\pi)$ of $\pi$ as follows: fix a splitting $H=\textup{Tors}(H)\times G$ and consider the ring homomorphism that we still denote with  $\sigma: \mathbb Z[H]\to \mathbb C[G]$ sending $(f,g)$, with $f\in\textup{Tors}(H)$ and $g\in G$, to $\sigma(f)g$, where \mbox{$\sigma(f)\in \mathbb C^*$}. The ring $\mathbb C[G]$ is a unique factorization domain and we set $\Delta^{\sigma}(\pi)=\gcd(\sigma(E(\pi))$. This is an element of $\mathbb C[G]$ defined up to multiplication by elements of $G$ and non-zero complex numbers. If $\Delta(\pi)$ denote the classic Alexander polynomial we have $\Delta^1(\pi)=\alpha \Delta(\pi)$, with $\alpha \in \mathbb C^*$.

If $L\subset L(p,q)$ is a link in a lens space then the $\sigma$-\textit{twisted Alexander polynomial} of $L$ is  $\Delta^{\sigma}_L=\Delta^{\sigma}(\pi_1(L(p,q) \smallsetminus L))$.  Since in this case $\textup{Tors}(H)=\mathbb Z_d$ then $\sigma(\textup{Tors}(H))$ is contained in the cyclic group generated by $\zeta$, where  $\zeta$ is a $d$-th primitive root of the unity. When $\mathbb Z[\zeta]$ is a principal ideal domain, in order to define $\Delta^{\sigma}_L$ we can consider the restriction $\sigma:\mathbb Z[H]\to \mathbb Z[\zeta][G]$. Note that $\Delta^{\sigma}_L\in\mathbb Z[\zeta][G]$ is defined up to multiplication by $\zeta^hg$, with $g\in G$.  In this setting we recall the following theorem.

\begin{prop}{\upshape\cite{MM}}
If $\zeta$ is a $d$-th primitive root of unity, then  the ring $\mathbb{Z}[\zeta]$ is a principal ideal domain if and only if $d\cong 2\ \mod 4$ or $d$ is one of the following 30 integers: 1, 3, 4, 5, 7, 8, 9, 11, 12, 13, 15, 16, 17, 19, 20, 21, 24, 25, 27, 28, 32, 33, 35, 36, 40, 44, 45, 48, 60, 84.
\end{prop}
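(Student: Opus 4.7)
The plan is to identify $\mathbb{Z}[\zeta]$ with the full ring of integers $\mathcal{O}_K$ of the cyclotomic field $K=\mathbb{Q}(\zeta_d)$ (a standard fact about cyclotomic extensions) and then translate the PID condition, via the Dedekind property of $\mathcal{O}_K$, into the triviality of the ideal class group, i.e.\ the condition that the class number $h_d:=h(\mathbb{Q}(\zeta_d))$ equals $1$.

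First I would dispose of the ``trivially doubled'' case $d\equiv 2\pmod 4$. Writing $d=2m$ with $m$ odd, the element $-\zeta_d$ has multiplicative order $m$ and is therefore a primitive $m$-th root of unity; consequently $\mathbb{Q}(\zeta_d)=\mathbb{Q}(\zeta_m)$ and $\mathbb{Z}[\zeta_d]=\mathbb{Z}[\zeta_m]$. Hence $\mathbb{Z}[\zeta_d]$ is a PID iff $\mathbb{Z}[\zeta_m]$ is, which accounts for the whole infinite family in the statement: each such $d$ inherits the PID property from an odd $m$ that will appear in the finite list (e.g.\ $d=6$ from $m=3$, $d=22$ from $m=11$).

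For the remaining $d$ (odd, or divisible by $4$) the real content is to show that $h_d=1$ exactly for the thirty listed integers. I would combine two classical ingredients. On the analytic side, the class number formula for $\mathbb{Q}(\zeta_d)$ expresses $h_d$ in terms of the absolute discriminant $|d_K|$, the regulator $R_K$, the number of roots of unity, and a product of Dirichlet $L$-values at $s=1$; using the decomposition $h_d=h_d^{+}h_d^{-}$ into its real and relative parts is convenient because $h_d^{-}$ admits a simple formula in terms of generalized Bernoulli numbers. On the geometric side, Minkowski's bound together with Odlyzko-type lower bounds on $|d_K|$, combined with the growth $\phi(d)=[\mathbb{Q}(\zeta_d):\mathbb{Q}]\to\infty$, produces an explicit threshold $d_0$ beyond which one can guarantee $h_d>1$; one then simply consults the tabulated class numbers for $d\leqslant d_0$ to pick out the thirty cases where $h_d=1$.

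The main obstacle is making the cut-off effective: one must bound the relevant $L$-value product and control the regulator tightly enough so that the threshold $d_0$ is small enough to enumerate the remaining cases in practice. This finite-but-delicate verification, together with the class number tables in the small-$d$ range, is exactly what is carried out in the cited reference \cite{MM}, to which we defer for the numerical details; the logical skeleton above reduces the statement to those computations.
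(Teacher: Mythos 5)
The paper gives no proof of this proposition: it is quoted verbatim from the cited reference \cite{MM} (Masley--Montgomery), so there is no internal argument to compare yours against. Your logical skeleton for the substantive direction is the standard one and is indeed what \cite{MM} carries out: identify $\mathbb{Z}[\zeta_d]$ with the full ring of integers of $\mathbb{Q}(\zeta_d)$, use the Dedekind property to translate ``PID'' into ``class number $h_d=1$'', split $h_d=h_d^+h_d^-$, and combine discriminant/$L$-function lower bounds with tables to make the verification finite. Deferring the numerical work to \cite{MM} is appropriate here, since the paper itself does exactly that.

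There is, however, a genuine error in your treatment of the case $d\equiv 2\pmod 4$. The reduction $\mathbb{Q}(\zeta_{2m})=\mathbb{Q}(\zeta_m)$ and $\mathbb{Z}[\zeta_{2m}]=\mathbb{Z}[\zeta_m]$ for odd $m$ is correct, but the conclusion you draw from it --- that every such $d$ ``inherits the PID property from an odd $m$ that will appear in the finite list'' --- is false: most odd $m$ do \emph{not} appear in the list. For instance $m=23$ gives $h(\mathbb{Q}(\zeta_{23}))=3$, so $\mathbb{Z}[\zeta_{46}]$ is not a principal ideal domain even though $46\equiv 2\pmod 4$. What the reduction actually proves is that for $d=2m\equiv 2\pmod 4$ the ring $\mathbb{Z}[\zeta_d]$ is a PID if and only if $m$ itself is one of the (odd) entries of the list. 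This means the clause ``$d\equiv 2\pmod 4$'' in the proposition as printed is an inaccurate transcription of the theorem of \cite{MM}; the correct statement is that $\mathbb{Z}[\zeta_d]$ is a PID if and only if $d$ is one of the thirty listed integers or $d=2m$ with $m$ odd and in that list. Rather than manufacturing a justification for the clause as written, you should point out that it cannot be proved because it is false as stated.
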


A link is called \textit{local} if it is contained in a ball embedded in $L(p,q)$. For local links the following properties hold.

\begin{prop}
Let $L$ be a  local link in $L(p,q)$. Then $\Delta^{\sigma}_L=0$ if $\sigma\ne 1$, and $\Delta_L= p \cdot \Delta_{\bar{L}}$ otherwise, where $\bar L$ is the link L considered as a link in $\s3$.
\end{prop}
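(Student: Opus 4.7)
The plan is to exploit the fact that locality lets us decompose the link complement and obtain a particularly simple presentation of its fundamental group, and then carry out the Fox calculus in block form before evaluating via $\sigma$.

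Since $L$ lies inside an embedded ball $B\subset L(p,q)$, I would first apply the Seifert-Van Kampen theorem to the decomposition $L(p,q)\smallsetminus L=(L(p,q)\smallsetminus \mathrm{int}\,B)\cup(B\smallsetminus L)$, whose intersection deformation retracts to a $2$-sphere and is therefore simply connected. This yields the free product
\[
\pi:=\pi_1(L(p,q)\smallsetminus L)\cong\pi_1(L(p,q)\smallsetminus \mathrm{int}\,B)\,*\,\pi_1(B\smallsetminus L)\cong\mathbb{Z}_p\,*\,\pi_1(\s{3}\smallsetminus\bar{L}).
\]
Combining this with a Wirtinger presentation $\langle a_1,\ldots,a_r\mid w_1,\ldots,w_s\rangle$ of $\pi_1(\s{3}\smallsetminus\bar{L})$, I obtain $\pi=\langle a_1,\ldots,a_r,f\mid w_1,\ldots,w_s,f^p\rangle$. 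Since $L$ is local, each component is null-homologous, so Corollary~\ref{homology} gives $H:=\pi/\pi'\cong\mathbb{Z}^\nu\oplus\mathbb{Z}_p$, with $\textup{Tors}(H)$ generated by $f$ and $G=\mathbb{Z}^\nu$.

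The key observation is that the Wirtinger relations involve only the $a_j$'s and the relation $f^p$ involves only $f$, so the Alexander-Fox matrix is block diagonal:
\[
A=\begin{pmatrix} A_{\bar{L}} & 0 \\ 0 & 1+f+\cdots+f^{p-1}\end{pmatrix},
\]
and $A_{\bar{L}}$ has entries in the subring $\mathbb{Z}[\mathbb{Z}^\nu]\subset\mathbb{Z}[H]$. A case analysis on the $r\times r$ minors of $A$ shows that any minor omitting the $f^p$-row or the $f$-column vanishes (one block contributes a zero row or column), while any minor using all $r$ columns of $A_{\bar{L}}$ also vanishes, because the fundamental Fox identity $\sum_j (\partial w_i/\partial a_j)(a_j-1)=0$ yields a nontrivial linear dependence among the columns of $A_{\bar{L}}$ over the integral domain $\mathbb{Z}[\mathbb{Z}^\nu]$. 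Hence every nonzero minor has the form $\det(M)\cdot(1+f+\cdots+f^{p-1})$ for some $(r-1)\times(r-1)$ minor $M$ of $A_{\bar{L}}$, and therefore $E(\pi)=(1+f+\cdots+f^{p-1})\cdot E(\pi_1(\s{3}\smallsetminus\bar{L}))$.

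It then remains to evaluate via $\sigma$. Since $A_{\bar{L}}$ already has entries in $\mathbb{Z}[\mathbb{Z}^\nu]$, $\sigma$ acts as the identity on it, so $\gcd(\sigma(E(\pi_1(\s{3}\smallsetminus\bar{L}))))=\Delta_{\bar{L}}$ up to a unit of $\mathbb{C}[G]$. Writing $\zeta=\sigma(f)$, we have $\sigma(1+f+\cdots+f^{p-1})=1+\zeta+\cdots+\zeta^{p-1}$, which equals $p$ when $\sigma=1$ and $0$ when $\sigma\ne 1$ (since $\zeta$ is then a nontrivial $p$-th root of unity). Both conclusions follow. The main technical point to justify carefully is the vanishing of the top-dimensional minors of $A_{\bar{L}}$ via the Fox identity together with the domain property of $\mathbb{Z}[\mathbb{Z}^\nu]$; the rest of the minor analysis is immediate from the block structure.
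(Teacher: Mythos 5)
Your proposal is correct and follows essentially the same route as the paper: present $\pi_1$ with the Wirtinger relations plus the single lens relation $f^p=1$, observe the block/cofactor structure of the Alexander--Fox matrix so that every relevant minor is a multiple of $1+f+\cdots+f^{p-1}$ times a minor of $A_{\bar L}$, and evaluate that factor under $\sigma$ to get $p$ or $0$. You merely supply more detail than the paper does (the Seifert--Van Kampen free-product decomposition and the Fox-identity argument for the vanishing of the top-dimensional minors of $A_{\bar L}$), and those details are sound.
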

\begin{proof}
The fundamental group of $L$ can be presented with the relations of Wirtinger type and the lens relation $f^p=1$ only. Therefore the column in the Alexander-Fox matrix $A$ corresponding to the Fox derivative of the lens relation is everywhere zero except for the entry corresponding to the $f$-derivative, which is \mbox{$1+f+f^2+\cdots+f^{p-1}$}.  Moreover, the cofactor of this non-zero entry is equal to the Alexander-Fox matrix of $\bar{L}$.  So the statement follows by observing that in the case of $\Delta_L$, the generator $f$ is sent to 1, while if $\sigma\neq 1$, the generator $f$ is sent in a $k$-th root of the unity, where $k$ divides $p$, and so $\sigma(1+f+f^2+\cdots+f^{p-1})=0$.
\end{proof}

As a consequence a knot with a non trivial twisted Alexander polynomial cannot be local.  

Figure~\ref{tavola1} shows the twisted Alexander polynomials of a local trefoil knot in $L(4,1)$ and proves that twisted Alexander polynomial may distinguish knots with the same Alexander polynomial.

\begin{figure}[htb]
\begin{minipage}[c]{0.21\linewidth}
\centering
\includegraphics[width=3.5cm]{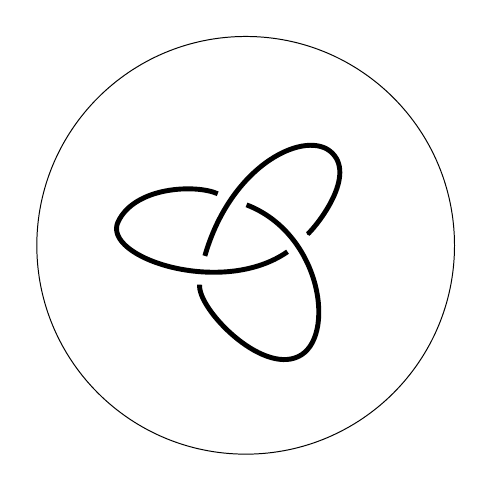} 
\end{minipage}
\begin{minipage}[c]{0.24\linewidth}
\centering
\begin{tabular}{c}
$\Delta^{1}_{T}=4(t^2-t+1)$ \\
 $\Delta^{-1}_{T}=0$ \\
  $\Delta^{i}_{T}=0$ \\
   $\Delta^{-i}_{T}=0$ 
\end{tabular}
\end{minipage}
\begin{minipage}[c]{0.09\linewidth}
\end{minipage}
\begin{minipage}[c]{0.23\linewidth}\centering
\includegraphics[width=3.5cm]{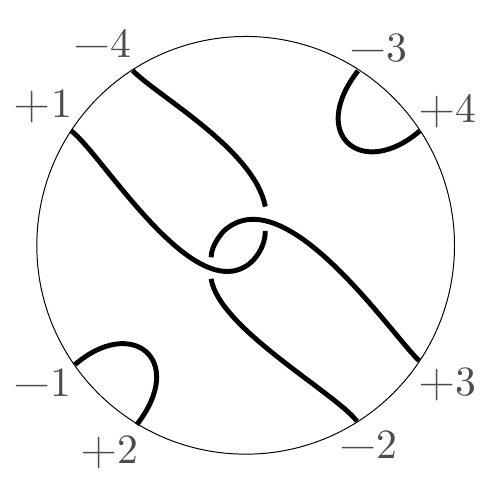}
\end{minipage}
\begin{minipage}[c]{0.23\linewidth}\centering
\begin{tabular}{c}
$\Delta^{1}_{K}=4(t^2-t+1)$ \\
 $\Delta^{-1}_{K}=0$ \\
  $\Delta^{i}_{K}=2(t-1)$ \\
   $\Delta^{-i}_{K}=2(t-1)$ 
\end{tabular}
\end{minipage}
\caption[legenda elenco figure]{Twisted Alexander polynomials for two knots in $L(4,1)$.}\label{tavola1}
\end{figure}
Let $L=L_1\sharp L_2$, where $\sharp$ denote  the connected sum and $L_2$ is a local link.  The decomposition $(L(p,q),L)=(L(p,q),L_1)\sharp (\s3 ,L_2)$ induces  monomorphisms $j_1:H_1(L(p,q) \smallsetminus L_1)\to H_1(L(p,q)\smallsetminus L)$ and $j_2:H_1(\s3\smallsetminus L_2)\to H_1(L(p,q)\smallsetminus L)$. Given $\sigma:\mathbb Z [H_1(L(p,q) \smallsetminus L)]\to \mathbb C[G]$  induced by 
\hbox{$\sigma\in\hom(\textup{Tors}(H_1(L(p,q) \smallsetminus L)),\mathbb C^*)$,}  denote with  $\sigma_1$ and $\sigma_2$ its restrictions to $\mathbb Z[j_1(H_1(L(p,q)\smallsetminus L_1))]$  and $\mathbb Z[j_2(H_1(\s3\smallsetminus L_2))]$ respectively. We have the following result.

\begin{prop}

Let $L=L_1\sharp L_2\subset L(p,q)$, where  $L_2$ is local link.  
With the above notations we have $\Delta_L^{\sigma}=\Delta_{L_1}^{\sigma_1}\cdot  \Delta_{L_2}^{\sigma_2}$.

\end{prop}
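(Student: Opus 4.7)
The plan is to combine a topological Seifert--Van Kampen decomposition of the link exterior with a block-structure computation of the Alexander--Fox matrix. Since $L_2$ is local, the pair $(L(p,q),L)$ splits along a $2$-sphere as $(L(p,q),L)=(L(p,q),L_1)\,\sharp\,(\s3,L_2)$, so the exterior decomposes as $L(p,q)\smallsetminus L = X_1\cup X_2$ with $X_1\simeq L(p,q)\smallsetminus L_1$, $X_2\simeq \s3\smallsetminus L_2$, and $X_1\cap X_2$ homotopy equivalent to $\s1$ generated by the meridian $\mu$ of the arcs involved in the connected sum. Seifert--Van Kampen then gives
\[
\pi_1(L(p,q)\smallsetminus L)\cong \pi_1(L(p,q)\smallsetminus L_1)\ast_{\langle\mu\rangle}\pi_1(\s3\smallsetminus L_2),
\]
via the inclusions $j_1,j_2$ already featured in the statement.

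Next I would combine the deficiency-one presentation of $\pi_1(L(p,q)\smallsetminus L_1)$ coming from Theorem~\ref{lpqio} and Remark~\ref{ridotto} with a classical Wirtinger presentation of $\pi_1(\s3\smallsetminus L_2)$, choosing distinguished meridians $\mu_1,\mu_2$ of the components being summed. The amalgamated product is presented by the disjoint union of generators and relations together with the extra relation $\mu_1\mu_2^{-1}$, and after eliminating $\mu_2$ in favour of $\mu_1$ the Alexander--Fox matrix over $\mathbb Z[H_1(L(p,q)\smallsetminus L)]$ takes the block lower-triangular form
\[
A=\begin{pmatrix} A_1 & 0 \\ C & A_2' \end{pmatrix},
\]
where $A_1$ is the Alexander--Fox matrix of $\pi_1(L(p,q)\smallsetminus L_1)$ pushed forward via $j_1$, the matrix $A_2'$ is obtained from the Alexander--Fox matrix $A_2$ of $\pi_1(\s3\smallsetminus L_2)$ (pushed forward via $j_2$) by deleting the $\mu_2$-column, and $C$ has all entries zero except in the $\mu_1$-column, which carries the deleted $\mu_2$-column of $A_2$. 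The upper-right block is zero because the relations of either factor involve no generator of the other apart from the identified meridian.

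The third step is to compute the maximal minors of $A$. Omitting a column from the $A_1$-block gives, by block triangularity, $\det(A_1^{(c)})\cdot\det(A_2^{(\mu_2)})$ as $c$ ranges over the columns of $A_1$. Omitting a column $c'$ from the $A_2'$-block instead gives, after Laplace expansion along the top rows -- where the zero block forces all contributions to vanish except the one corresponding to also omitting the $\mu_1$-column from the top -- a determinant of the form $\pm\det(A_1^{(\mu_1)})\cdot\det(A_2^{(c')})$. Applying $\sigma$, whose restrictions are $\sigma_1$ and $\sigma_2$ by construction, and using the classical fact that for a Wirtinger presentation all maximal minors of the Alexander--Fox matrix coincide up to units, one concludes that the gcd of $\sigma(E(\pi_1(L(p,q)\smallsetminus L)))$ factors as $\Delta_{L_1}^{\sigma_1}\cdot\Delta_{L_2}^{\sigma_2}$.

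The main obstacle I anticipate is precisely this last gcd computation. The divisibility $\Delta_{L_1}^{\sigma_1}\cdot\Delta_{L_2}^{\sigma_2}\mid\Delta_L^{\sigma}$ is immediate since the product divides every generator of $\sigma(E(\pi_1(L(p,q)\smallsetminus L)))$. For the opposite direction one must exploit the Wirtinger column-independence for $L_2$ -- so that every $\sigma_2(\det A_2^{(c')})$ agrees with $\Delta_{L_2}^{\sigma_2}$ up to units, and consequently family (a) alone already has gcd $\Delta_{L_1}^{\sigma_1}\cdot\Delta_{L_2}^{\sigma_2}$ -- together with a careful verification that $\sigma$ restricts cleanly to the two sub-rings so that the block determinants split multiplicatively. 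Once these two points are in place, the product formula follows.
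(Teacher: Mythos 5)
Your argument follows essentially the same route as the paper: decompose the exterior via Seifert--Van Kampen, observe that the Alexander--Fox matrix acquires a block structure (the paper keeps the amalgamating relation $a_1=b_1$ as an extra row rather than eliminating the second meridian, but this is cosmetic), and extract the product formula from the gcd of the maximal minors, a step the paper itself dispatches as ``a simple computation''. One small caveat on your final step: the ``classical fact'' that all maximal minors of a Wirtinger Alexander--Fox matrix coincide up to units is true for knots but not for links with several components, where one instead has $(\sigma(x_i)-1)\det A^{(j)}=\pm u\,(\sigma(x_j)-1)\det A^{(i)}$ for units $u$; this relation still yields the claimed factorization of the gcd, so the conclusion stands.
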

\begin{proof}
Since $(L(p,q),L)=(L(p,q),L_1)\sharp (\s3,L_2)$, by Van Kampen theorem we get  \hbox{$\pi_1(L(p,q) \smallsetminus L)=\langle a_1,\ldots ,a_n,b_1,\ldots ,b_m\mid r_1,\ldots ,r_{n-1},s_1,\ldots,s_{m-1},a_1=b_1\rangle$,} where $\pi_1(L(p,q)\setminus L_1,*)=\langle a_1,\ldots ,a_n\mid r_1,\ldots ,r_{n-1}\rangle$ and  $\pi_1(\s3\setminus L_2, *)=\langle b_1,\ldots ,b_m\mid s_1,\ldots ,s_{m-1}\rangle$. So  the Alexander-Fox matrix  of $L$ is 
$$A_L=\left(\begin{array}{cccccc}\ &A_{L_1}&\ &\  &0 &\ \\\ & 0 \ &\ & \ &A_{L_2}&\ \\-1\ 0&\cdots& 0&1\ 0&\cdots&\ 0\end{array}\right),$$
where $A_{L_i}$ is the Alexander-Fox matrix of $L_i$, for $i=1,2$. If $d_k(A)$ denotes the greatest common division of all $k$-minors of a matrix $A$, then a simple computation shows that $d_{m+n-1}(A_L)=d_{n-1}(A_{L_1})\cdot d_{m-1}(A_{L_2})$. Therefore it is easy to see that $\Delta_L^{\sigma}=\Delta_{L_1}^{\sigma_1}\cdot  \Delta_{L_2}^{\sigma_2}$.
\end{proof}

In Figure~\ref{tavola2} we compute the twisted Alexander polynomials of the connected sum of a local trefoil knot $\overline{T}$ with the three knots \hbox{$K_{0},K_{1},K_{2}\subset L(4,1)$} depicted in the left part of the figure, respectively. Note that for the case of $K_{2} \sharp \overline{T} $, the map $\sigma_{2}$, that is the restriction of $\sigma$ to $\mathbb Z[j_2(H_{1}(\s3\smallsetminus \overline{T}))]$,  sends the generator $g \in \mathbb{Z} [H_{1}(S^{3} \smallsetminus \overline{T})] $  in $t^{2} \in \mathbb{Z} [H_{1}(L(p,q) \smallsetminus K_{2} \sharp \overline{T})] $ (resp. in $-t^2$) if $\sigma=1$ (resp. if $\sigma=-1$), instead of $t$ as it does for the classical Alexander polynomial.

\begin{figure}[htb]

\begin{minipage}[c]{0.21\linewidth}\centering
\includegraphics[width=3.5cm]{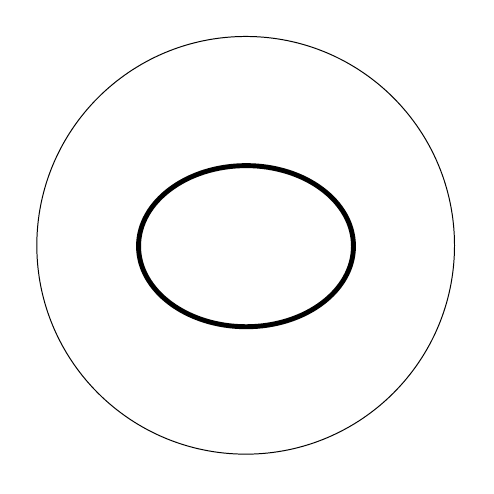}
\end{minipage}
\begin{minipage}[c]{0.21\linewidth}\centering
\begin{tabular}{c}
$\Delta^{1}_{K_{0}}=4$ \\
 $\Delta^{-1}_{K_{0}}=0$ \\ 
 $\Delta^{i}_{K_{0}}=0$ \\
  $\Delta^{-i}_{K_{0}}=0$ 
\end{tabular}
\end{minipage}
\begin{minipage}[c]{0.06\linewidth}
\end{minipage}
\begin{minipage}[c]{0.22\linewidth}
\centering
\includegraphics[width=3.5cm]{l41nodotrifoglio0omol.pdf}
\end{minipage}
\begin{minipage}[c]{0.23\linewidth}
\centering
\begin{tabular}{c}
$\Delta^{1}_{K_{0} \sharp \overline{T}}=4(t^2-t+1)$ \\
 $\Delta^{-1}_{K_{0} \sharp \overline{T}}=0$ \\
  $\Delta^{i}_{K_{0} \sharp \overline{T}}=0$ \\
   $\Delta^{-i}_{K_{0} \sharp \overline{T}}=0$ 
\end{tabular}
\end{minipage}

\vspace{5pt}

\begin{minipage}[c]{0.21\linewidth}\centering
\includegraphics[width=3.5cm]{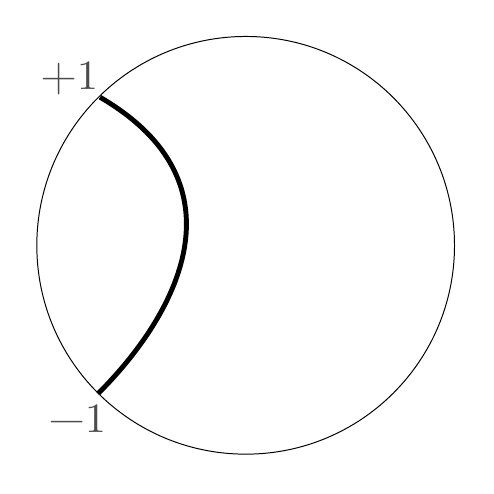}
\end{minipage}
\begin{minipage}[c]{0.21\linewidth}\centering
\begin{tabular}{c}
$\Delta^{1}_{K_{1}}=1$ 
\end{tabular}
\end{minipage}
\begin{minipage}[c]{0.06\linewidth}
\end{minipage}
\begin{minipage}[c]{0.22\linewidth}
\centering
\includegraphics[width=3.5cm]{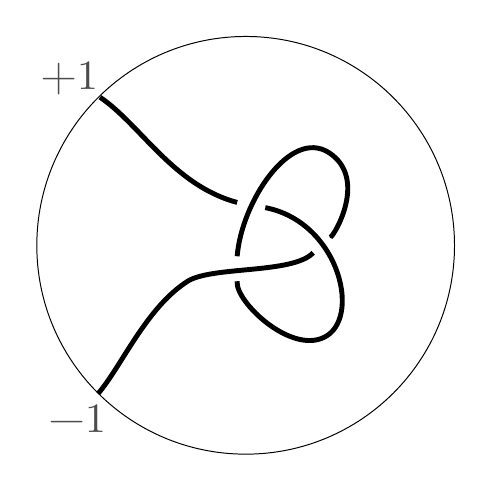}
\end{minipage}
\begin{minipage}[c]{0.23\linewidth}
\centering
\begin{tabular}{c}
$\Delta^{1}_{K_{1} \sharp \overline{T}}=t^2-t+1$ 
\end{tabular}
\end{minipage}

\vspace{5pt}

\begin{minipage}[c]{0.21\linewidth}
\centering
\includegraphics[width=3.5cm]{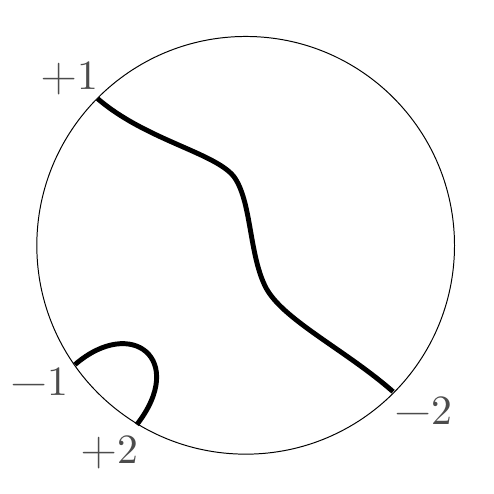}
\end{minipage}
\begin{minipage}[c]{0.21\linewidth}\centering
\begin{tabular}{c}
$\Delta^{1}_{K_{2}}=t+1$ \\
$\Delta^{-1}_{K_{2}}=1$ 
\end{tabular}
\end{minipage}
\begin{minipage}[c]{0.06\linewidth}
\end{minipage}
\begin{minipage}[c]{0.22\linewidth}
\centering
\includegraphics[width=3.5cm]{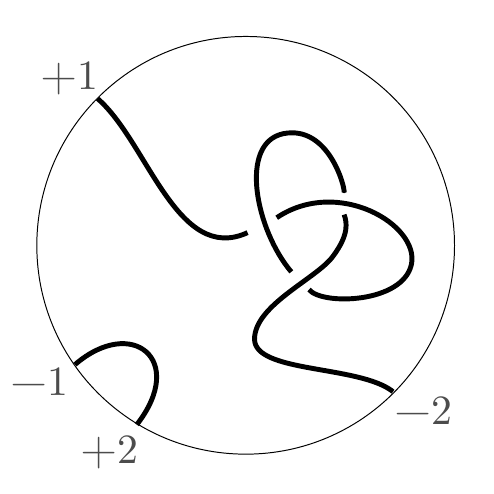}
\end{minipage}
\begin{minipage}[c]{0.23\linewidth}
\centering
\begin{tabular}{c}
$\Delta^{1}_{K_{2} \sharp \overline{T}}=(t+1)(t^4-t^2+1)$ \\
 $\Delta^{-1}_{K_{2} \sharp \overline{T}}=t^4+t^2+1$ 
\end{tabular}
\end{minipage}

\vspace{2pt}
\caption[legenda elenco figure]{Twisted Alexander polynomials for three knots in $L(4,1)$.}\label{tavola2}
\end{figure}

\begin{prop}{\upshape\cite{T}}
Let $L$ be a knot in a lens space then:
\begin{itemize}
\item[1)] $\Delta^{\sigma}_L(t)=\Delta^{\sigma}_L(t^{-1})$ (i.e., the twisted Alexander polynomial is symmetric);
\item[2)] $\Delta(1)=|\textup{Tors}(H_1(L(p,q) \smallsetminus L))|$. 
\end{itemize}
\end{prop}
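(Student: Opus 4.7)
My plan is to treat the two statements independently. For the symmetry (1) I would invoke Poincar\'e--Lefschetz duality on the knot exterior, while for the evaluation (2) I would argue directly from the Alexander--Fox matrix using the balanced presentation of Theorem~\ref{lpqio}.

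For (1), let $X = L(p,q) \smallsetminus \nu(L)$ be the closed complement of a tubular neighborhood of $L$, a compact oriented $3$-manifold with torus boundary. The character $\sigma$ together with the projection $H \to G$ makes $R = \mathbb{Z}[\zeta][G]$ into a left $\mathbb{Z}[\pi_1(X)]$-module, and (up to units) the twisted Alexander polynomial $\Delta^{\sigma}_L$ is the order of the Alexander module $H_1(X; R)$. Poincar\'e--Lefschetz duality yields $H_i(X; R) \cong H^{3-i}(X, \partial X; \overline{R})$, where $\overline{R}$ denotes $R$ with the $\pi_1(X)$-action twisted by the involution $t \mapsto t^{-1}$, $\zeta \mapsto \zeta^{-1}$. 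Combining this with the universal coefficient theorem over the field of fractions of $R$ (to identify cohomology with the dual of homology) and with the long exact sequence of $(X, \partial X)$, I would conclude that the order ideal of $H_1(X; R)$ is invariant under $t \mapsto t^{-1}$, which is exactly the symmetry of $\Delta^{\sigma}_L$.

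For (2), let $m$ and $m-1$ be the numbers of generators and of independent relations in the balanced presentation of $\pi_1(L(p,q) \smallsetminus L)$ given by Theorem~\ref{lpqio} and Remark~\ref{ridotto}, and let $A$ be the corresponding Alexander--Fox matrix of size $(m-1) \times m$ over $\mathbb{Z}[H]$. For $\sigma = 1$, the projection $\mathbb{Z}[H] \to \mathbb{Z}[G] = \mathbb{Z}[t, t^{-1}]$ is simply the quotient by torsion, and evaluation at $t = 1$ composes to the augmentation $\varepsilon: \mathbb{Z}[F] \to \mathbb{Z}$. The standard Fox identity $\varepsilon(\partial r/\partial x_j) = (\text{exponent sum of } x_j \text{ in } r)$ then identifies the specialized matrix $A(1)$ with the abelianization matrix of the presentation, i.e. a presentation matrix of $H_1(L(p,q) \smallsetminus L)$. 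By Corollary~\ref{homology} this group is $\mathbb{Z} \oplus \mathbb{Z}_d$, so Smith normal form gives that the gcd of the $(m-1)$-minors of $A(1)$ equals $d = |\textup{Tors}(H_1(L(p,q) \smallsetminus L))|$, and hence $\Delta_L(1) = d$.

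The main obstacle is part (1): the duality argument in the twisted setting requires care to verify that the involution $t \mapsto t^{-1}$ on $R$ really realizes the $\mathbb{Z}[\zeta][G]$-dual of the Alexander module (rather than a module whose order is only a unit multiple of $\Delta^{\sigma}_L(t^{-1})$), and to handle properly the torus boundary contribution, since without the first homology of $\partial X$ being killed in the appropriate range the order ideal of $H_1(X;R)$ need not be a well-defined element of $\mathbb{Z}[\zeta][G]$. Part (2), by contrast, is essentially a book-keeping exercise once the balanced presentation and the Fox calculus identity at the augmentation are in hand.
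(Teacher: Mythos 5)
Note first that the paper itself offers no proof of this proposition: it is quoted verbatim from Turaev's monograph \cite{T}, so there is no internal argument to compare yours against. Judged on its own, your outline for part (1) follows the standard route (duality applied to the twisted Alexander module / Reidemeister torsion), and you have honestly flagged the delicate points. One remark: what duality actually produces is $\Delta^{\sigma}_L(t)\doteq\overline{\Delta^{\sigma}_L(t^{-1})}$ with the coefficients complex-conjugated (equivalently, with $\sigma$ replaced by $\sigma^{-1}$), which is consistent with the involution $\zeta\mapsto\zeta^{-1}$ you build into $\overline{R}$; the unadorned symmetry stated in the proposition is the slightly weakened form of this.

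For part (2) there is a genuine gap. Your argument correctly identifies $A(1)$ with a presentation matrix of $H_1(L(p,q)\smallsetminus L)\cong\mathbb{Z}\oplus\mathbb{Z}_d$, so the gcd of the $(m-1)$-minors of the \emph{specialized} matrix $A(1)$ is $d$. But $\Delta_L$ is the gcd of the $(m-1)$-minors $A_1,\ldots,A_m$ taken \emph{over} $\mathbb{Z}[t,t^{-1}]$, and forming gcd's does not commute with the evaluation $t\mapsto 1$: from $\Delta_L\mid A_i$ you only get $\Delta_L(1)\mid\gcd\bigl(A_1(1),\ldots,A_m(1)\bigr)=d$, and the reverse divisibility can fail for an arbitrary matrix (e.g.\ $\gcd(t+1,2)=1$ in $\mathbb{Z}[t,t^{-1}]$ while $\gcd(2,2)=2$). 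So as written you have proved only that $\Delta_L(1)$ divides $|\textup{Tors}(H_1(L(p,q)\smallsetminus L))|$. The standard way to close the gap is to pass to the infinite cyclic covering $\tilde X\to X$ determined by $H\to G$: the splitting $H_1(\tilde X,\tilde{*})\cong H_1(\tilde X)\oplus\Lambda$ identifies $\Delta_L$ with the order of the torsion $\Lambda$-module $H_1(\tilde X)$, the Wang sequence gives $H_1(\tilde X)/(t-1)H_1(\tilde X)\cong\textup{Tors}(H_1(X))$, and for a finitely generated torsion $\Lambda$-module $M$ of order $\Delta$ with $\Delta(1)\ne 0$ one has $|M/(t-1)M|=|\Delta(1)|$; combining these yields the stated equality (and also shows why one must rule out $\Delta_L(1)=0$, which your Fox-calculus bookkeeping does not address).
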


Before giving the relationship between the twisted Alexander polynomials and the Reidemeister torsion we briefly recall the definition of Reidemeister torsion (for further references see \cite{T}). 

If $c$ and $c'$ are two basis of a finite-dimensional vector space over a field $\mathbb F$, denote with $[c/c']$ the determinant of the matrix whose columns are the coordinates of the elements of $c$ respect to $c'$. Let $C$ be a finite chain complex of vector spaces $$0\to C_m\stackrel{\delta_m}{\to} C_{m-1}\stackrel{\delta_{m-1}}{\to}\cdots\stackrel{\delta_1}{\to}C_0\to 0$$ which is acyclic (i.e., the sequence is exact) and based (i.e., a distinguished  base is fixed for each vector space). 
For each $i\le m$, let $b_{i}$ be a sequence of vectors in $C_{i}$ such that $\delta_{i}(b_i)$ is a base of $\textup{Im}\delta_i$, and let $c_{i}$ be the fixed base of $C_{i}$. The juxtaposition of $\delta_{i+1}(b_{i+1})$ and $b_{i}$ gives a base of $C_{i}$ denoted by $\delta_{i+1}(b_{i+1})b_i$. 
The torsion of $C$ is defined as
$$\tau(C)=\Pi_{i=0}^m  [\delta_{i+1}(b_{i+1})b_i/c_i]^{(-1)^{i+1}}\in \mathbb F.$$
If $C$ is not acyclic the torsion is defined to be zero.

For a finite connected CW-complex $X$, let $\pi=\pi_{1}(X)$ and $H=H_1(X)=\pi/\pi'$.
Consider a ring homomorphism $\varphi:\mathbb Z[H]\to \mathbb F$ and
let $\hat X$ be the maximal abelian  covering of $X$ (corresponding to  $\pi'$).
Let $C_*(\hat X)$ be the cellular chain complex associated to $\hat X$. Since $H$ acts on $\hat X$ via deck transformations, $C_*(\hat X)$  is a complex of left $\mathbb Z[H]$-modules. Moreover the homomorphism $\varphi$ endows $F$ with the structure of  a $\mathbb Z[H]$-module via $fz=f\varphi (z)$, with $f\in F$ and $z\in\mathbb Z[H]$.
Then $\mathbb F \otimes_{\varphi} C_*(\hat X)$ is a chain complex of
finite dimensional vector spaces. The $\varphi$-torsion of $X$ is defined
to be $\tau(\mathbb F \otimes_{\varphi} C_*(\hat X))$. It depends on
the choice of a base for $\mathbb F \otimes_{\varphi} C_*(\hat X)$ and
so the $\varphi$-torsion is defined up to multiplication by $\pm
\varphi(h)$, with $h\in H$.

Let $L$ be a link in $L(p,q)$ and let $X=L(p,q)\smallsetminus L$, then
$X$ is homotopic to  a 2-dimensional cell complex $Y$.   The
$\varphi$-torsion $\tau^{\varphi}_L$ of a link $L$ is the \hbox{$\varphi$-torsion}
of $Y$. In order to
investigate the relationship between the torsion and  the twisted
Alexander polynomial, let $H=\textup{Tors}(H)\times G$ and consider  a map $\sigma:\mathbb Z[H]\to \mathbb
C[G]$ associated to a certain $\sigma\in\hom(\textup{Tors}(H),\mathbb C^*)$, as described in the beginning of this section. If $\mathbb C(G)$ denotes the field of quotient of $\mathbb C[G]$, then by composing with the projection into the quotient, $\sigma$ determines  a homomorphism $\mathbb Z[H]\to \mathbb
C(G)$ that we still denote with $\sigma$.   In this  way each $\sigma\in\hom(\textup{Tors}(H),\mathbb C^*)$  determines both a twisted Alexander polynomial $\Delta^{\sigma}_L$ and a torsion $\tau^{\sigma}_L$.

We say that a link $L\subset L(p,q)$ is \textit{nontorsion} if $\textup{Tors}(H_1(L(p,q) \smallsetminus L))=0$, otherwise we say that $L$ is \textit{torsion}. Note that a local link $L$ in a lens space different from $\s3$ is clearly torsion.
\begin{teo}
Let $L$ be a link in $L(p,q)$. If $L$ is a nontorsion knot and $t$ is a generator of its first homology group, then $\tau^{\sigma}_L(t-1)=\Delta^{\sigma}_L$. Otherwise $\tau^{\sigma}_L(t)=\Delta^{\sigma}_L$.
\end{teo}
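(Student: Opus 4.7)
The plan is to apply the Milnor--Turaev formula expressing the Reidemeister torsion of a 2-complex via the Fox derivatives of a deficiency-one presentation of its fundamental group, and then to identify the resulting ratio with $\Delta^{\sigma}_{L}$ up to the factor $(t-1)$ that survives precisely in the nontorsion knot case.

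First I would use the presentation of Theorem~\ref{lpqio} together with Remark~\ref{ridotto} to realize $L(p,q)\smallsetminus L$ (up to homotopy) as a 2-dimensional CW complex $Y$ with one 0-cell, $m$ 1-cells $x_{1},\ldots,x_{m}$, and $m-1$ 2-cells $r_{1},\ldots,r_{m-1}$. Lifting to the maximal abelian cover $\hat Y$ yields a free $\mathbb{Z}[H]$-complex
$$
0 \longrightarrow \mathbb{Z}[H]^{\,m-1} \xrightarrow{\ J^{T}\ } \mathbb{Z}[H]^{\,m} \xrightarrow{\ \partial_{1}\ } \mathbb{Z}[H] \longrightarrow 0,
$$
where $\partial_{1}$ has column entries $(x_{i}-1)$ and $J=(\partial r_{i}/\partial x_{j})$ is the Fox matrix (with entries projected into $\mathbb{Z}[H]$). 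Tensoring with $\mathbb{C}(G)$ through $\sigma$ and checking acyclicity (which follows as soon as some $\sigma(x_{k})\neq 1$), the elementary computation of the torsion of a based acyclic two-term complex yields
$$
\tau^{\sigma}_{L} \;=\; \pm\,\frac{\det\sigma(J_{k})}{\sigma(x_{k})-1},
$$
where $J_{k}$ denotes $J$ with its $k$-th column removed. By definition, $\Delta^{\sigma}_{L}$ is the greatest common divisor in $\mathbb{C}[G]$ of these same $(m-1)$-minors $\det\sigma(J_{k})$.

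The comparison is driven by the Fox fundamental identity $\sum_{j}(x_{j}-1)\,\partial r_{i}/\partial x_{j}=0$ in $\mathbb{Z}[H]$, which, after applying $\sigma$, gives the proportionality $(\sigma(x_{k})-1)\det\sigma(J_{j}) = \pm\,(\sigma(x_{j})-1)\det\sigma(J_{k})$ for every pair $(j,k)$. In the nontorsion knot case $H=\langle t\rangle$, every $\sigma(x_{k})-1 = t^{a_{k}}-1$ carries the same single copy of the irreducible factor $t-1$, and this proportionality forces $\det\sigma(J_{k}) = (\sigma(x_{k})-1)\cdot \bigl(\Delta^{\sigma}_{L}/(t-1)\bigr)$ up to units, yielding $\tau^{\sigma}_{L}(t-1) = \Delta^{\sigma}_{L}$. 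In every other case one can exhibit a pair of indices $j\neq k$ for which $\sigma(x_{j})-1$ and $\sigma(x_{k})-1$ are coprime in $\mathbb{C}[G]$: for a multi-component link two meridians are supplied by different free generators of $G$, while a torsion knot provides some $\sigma(x_{k})-1 = \zeta\,t^{a_{k}}-1$ with $\zeta$ a nontrivial root of unity. Coprimality forces $\sigma(x_{k})-1$ to divide $\det\sigma(J_{k})$, and the common quotient coincides with $\Delta^{\sigma}_{L}$, giving $\tau^{\sigma}_{L} = \Delta^{\sigma}_{L}$.

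The main obstacle is exactly this coprimality step in the ``otherwise'' case: one must verify, using the explicit shape of the generators produced by Theorem~\ref{lpqio}, that either a second free component of $G$ or a nontrivial value of $\sigma$ on $\mathrm{Tors}(H)$ is always available to kill the common $(t-1)$-factor of the Fox minors. A secondary subtlety is acyclicity of the twisted chain complex, which has to be controlled uniformly across all cases before the torsion formula can be invoked.
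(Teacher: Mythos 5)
Your proposal takes essentially the same route as the paper's proof: reduce to the deficiency-one Wirtinger-type presentation of Theorem~\ref{lpqio} and Remark~\ref{ridotto}, use the identity $(\sigma(a_i)-1)\,\tau^{\sigma}_{L}=\det A_i$ relating the torsion to the $(m-1)$-minors of the Alexander--Fox matrix (the paper imports this from \cite{HL} and asserts it generalizes to lens spaces, whereas you rederive it from the twisted chain complex of the maximal abelian cover), and then show that $\gcd_i(\sigma(a_i)-1)$ equals $t-1$ exactly in the nontorsion knot case and $1$ otherwise. The one organizational difference is your flagged coprimality step: rather than exhibiting two indices with $\sigma(x_j)-1$ and $\sigma(x_k)-1$ coprime (such a pair need not exist even when the overall gcd is $1$), the paper argues that the gcd $g$ divides $t-1$, because some word in the generators maps to $t$, and also divides some $t^{h_{i_0}}\zeta^{n_{i_0}}-1$ with $\zeta^{n_{i_0}}\neq 1$, these two elements being coprime --- a refinement you should adopt when completing that step.
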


\begin{proof}
According to Theorem~\ref{lpqio} and Remark~\ref{ridotto}, the group $\pi_1(L(p,q) \smallsetminus L)$ admits a presentation with $m$ generators and $m-1$ relations. So,  the Alexander-Fox matrix $A$ associated to such a presentation  is a \mbox{$(m-1)\times m$} matrix. This means that $\Delta^{\sigma}(L)=\gcd(\sigma(A_1),\ldots, \sigma(A_m))$, where $A_i$ is the $(m-1)$-minor of $A$ obtained removing the $i$-th column. Let $a_i$ be a generator of $\pi_1(L(p,q) \smallsetminus L)$. The formula $(\sigma(a_i)-1)\tau^{\sigma}_L=\det A_i$  that holds for links in the projective space (see \cite{HL}) generalizes to lens spaces. So, in order to get the statement it is enough to prove that  $\gcd(\sigma(a_1)-1,\ldots,\sigma(a_m)-1)$ is equal to $t-1$, where $t$ is a generator of the free part of $H_1(L(p,q) \smallsetminus L)$, if $L$ is a torsion knot, and equal to 1 otherwise.

Let  $L$ be a torsion knot and denote with  $t$ and $u$ a generator of the free part and the torsion part of $H_1(L(p,q) \smallsetminus L)$ respectively. Moreover let $d$ be the order of the torsion part of $H_1(L(p,q) \smallsetminus L)$. 
If $\textup{pr}(a_i)=t^{h_i}u^{n_i}$ then $\sigma(a_i)=t^{h_i}\zeta^{n_i}$ where $\zeta$ is a $d$-th root of the identity. A simple computation shows that $g$ divides $t^{\sum_{i=1}^{m}h_i}\zeta^{\sum_{i=1}^{m}n_i}-1$, for any $\alpha_i\in\mathbb Z$,  where $g=\gcd(\sigma(a_1)-1,\ldots,\sigma(a_m)-1)$. Since $t\in \textup{pr}(\pi_1(L(p,q) \smallsetminus L))$, there exist $\alpha_i$ such that 
$t=\Pi_{i=1}^m \textup{pr}(a_i^{\alpha_i})=t^{\sum_{i=1}^{m}\alpha_i h_i}u^{\sum_{i=1}^{m}\alpha_i n_i}$; 
so $\sum_{i=1}^{m}\alpha_i h_i=1$ and  $d$ divides $\sum_{i=1}^{m}\alpha_i n_i$. Then $g$ divides $t-1$ and therefore either $g=1$ or $g=t-1$. Analogously, since $u\in \textup{pr}(\pi_1(L(p,q) \smallsetminus L))$,  there exists   $i_0$ such that $g$ divides $\sigma(a_{i_0})-1=t^{h_{i_0}}\zeta^{n_{i_0}}-1$ and $n_{i_0}$ is not divided by $d$. The statement follows by observing that, in this case, $\gcd (t-1,  t^{h_{i_0}} \zeta^{n_{i_0}}-1)=1$. 

If $L$ is torsion and has at least two component then $\sigma(a_i)=t_1^{h_{11}}\cdots t_{\nu}^{h_{1\nu}}\zeta^{n_i}$, where $\nu$ is the number of components. The statement is obtained by setting $t_2=\cdots =t_{\nu}=1$ and applying the previous argument to $t_1$. 

If $L$ is a nontorsion knot, then $H_1(L(p,q) \smallsetminus L)=\langle t\rangle$ and $\sigma(a_i)=t^{h_i}$. In this case it is easy to prove that $\gcd(t^{h_1}-1, \ldots,t^{h_m}-1)=t-1$.

Finally, if  $L$ is nontorsion and has at least two component, then $\sigma(a_i)=t_1^{h_{11}}\cdots t_{\nu}^{h_{1\nu}}$. By letting $t_j=1$ for $j\ne i$ and applying the previous reasoning to $t_i$, for each $i=1,\ldots,\nu$, we obtain $\gcd(\sigma(a_1)-1,\ldots,\sigma(a_m)-1)=\gcd(t_1-~1,\ldots,t_{\nu}-1)=1$. 
\end{proof}

These results generalize those obtained in \cite{K} for knots in $\s3$ and \cite{HL} for link in $L(2,1)\cong\mathbb{RP}^3$. Moreover, in \cite{KL} an analogous result is obtained for CW-complexes but considering only a one-variable Alexander polynomial associated to an infinite cyclic covering of the complex. \\

If $L$ has at least two components we can consider the projection  \\\mbox{$\varphi:\mathbb Z[\zeta][G]=\mathbb Z[\zeta][t_{1}, \ldots , t_{m}, t_{1}^{-1}, \ldots, t_{m}^{-1}] \to\mathbb Z[\zeta][t,t^{-1}]$}, sending each variable $t_{i}$ to $t$. The  \textit{one-variable} twisted Alexander polynomial of $L$ is $\bar{\Delta}^{\sigma}_L=\varphi(\Delta^{\sigma}_L)$.   

The same argument used in the previous proof leads to the following statement, regarding the one-variable twisted polynomial.

\vbox{
\begin{teo}
Let $L$ be a link in $L(p,q)$ with at least two components. If $L$ is a nontorsion link and $t$ is a generator of its first homology group then $\tau^{\sigma}_L(t-1)=\bar{\Delta}^{\sigma}_L$. Otherwise $\tau^{\sigma}_L(t)=\bar{\Delta}^{\sigma}_L$.
\end{teo}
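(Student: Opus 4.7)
The plan is to mirror the argument of the previous theorem, inserting the projection $\varphi$ at the appropriate spots and verifying that the key gcd computation carries through after collapsing variables. First I would invoke Remark~\ref{ridotto} to fix a balanced presentation of $\pi_{1}(L(p,q)\smallsetminus L)$ with $m$ generators $a_{1},\ldots,a_{m}$ and $m-1$ relators, so that the Alexander--Fox matrix $A$ has size $(m-1)\times m$. Then $\bar{\Delta}^{\sigma}_L = \gcd_i \varphi\sigma(\det A_i)$, where $A_i$ is obtained from $A$ by removing the $i$-th column. The determinantal identity $(\sigma(a_i)-1)\tau^{\sigma}_L=\det A_i$, pushed forward along $\varphi$, yields
\[
(\varphi\sigma(a_i)-1)\,\tau^{\sigma}_L \;=\; \varphi\sigma(\det A_i),
\]
where $\tau^{\sigma}_L$ here denotes the $\varphi\circ\sigma$--torsion, which is exactly the object appearing in the statement. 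Taking the gcd over $i$, the theorem reduces to showing that $g:=\gcd_i(\varphi\sigma(a_i)-1)$ equals $t-1$ in the nontorsion case and $1$ otherwise.

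Next I would split into cases. In the nontorsion case $H_{1}(L(p,q)\smallsetminus L)=\mathbb{Z}^{\nu}$, so $\varphi\sigma(a_i)=t^{H_i}$ with $H_i=h_{i1}+\cdots+h_{i\nu}$. Since the $a_i$'s generate $\pi_{1}$, their abelianisations generate $\mathbb{Z}^{\nu}$, and composing with $\varphi$ gives a surjection onto $\mathbb{Z}$; hence $\gcd_i H_i=1$ and $g=\gcd_i(t^{H_i}-1)=t^{\gcd_i H_i}-1=t-1$.

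In the torsion case $\varphi\sigma(a_i)=t^{H_i}\zeta^{n_i}$, and I would adapt verbatim the two-step argument used for torsion knots in the previous proof. A combination of the $a_i$'s abelianising to the chosen free generator produces integers $\alpha_i$ with $\sum\alpha_i H_i=1$ and $d\mid\sum\alpha_i n_i$, which forces $g\mid t-1$, so $g\in\{1,t-1\}$. A second combination abelianising to the torsion generator $u$ yields an index $i_0$ with $n_{i_0}\not\equiv 0\pmod d$; then $g$ divides $t^{H_{i_0}}\zeta^{n_{i_0}}-1$, and evaluating at $t=1$ gives $\zeta^{n_{i_0}}-1\neq 0$, so $\gcd(t-1,\,t^{H_{i_0}}\zeta^{n_{i_0}}-1)=1$ and therefore $g=1$.

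The delicate point I expect to spend care on is justifying that the determinantal identity $(\sigma(a_i)-1)\tau^{\sigma}_L=\det A_i$ persists after the substitution $t_j\mapsto t$, since collapsing variables could in principle introduce cancellations in $\tau^{\sigma}_L$. Because the torsion in the statement is defined through the composite $\varphi\circ\sigma$ from the outset, the same cellular computation used in the previous theorem applies unchanged; what remains is the gcd bookkeeping above, which is essentially mechanical given the argument already carried out.
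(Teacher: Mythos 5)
Your proposal is correct and follows exactly the route the paper intends: the paper proves this theorem by simply asserting that ``the same argument used in the previous proof'' applies, and your reconstruction---inserting $\varphi$ into the determinantal identity, reducing to $\gcd_i(\varphi\sigma(a_i)-1)$, and redoing the gcd computation with the collapsed exponents $H_i=\sum_j h_{ij}$---is precisely that argument carried out in detail. The one point you flag as delicate (that the identity $(\sigma(a_i)-1)\tau^{\sigma}_L=\det A_i$ survives the substitution $t_j\mapsto t$) is resolved correctly, since the relevant torsion is by definition the one associated to the composite homomorphism $\varphi\circ\sigma$.
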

}

The computation of $\bar{\Delta}^{\sigma}_L$ for knots in arbitrary lens spaces has been implemented in a program using  Mathematica code: the input is a knot diagram in $L(p,q)$ given via a generalization of the Dowker-Thistlewaithe code (see \cite{DT,DH,Ta}).

\end{section}


\vspace{15 pt} {ALESSIA CATTABRIGA, Department of Mathematics,
University of Bologna, ITALY. E-mail: alessia.cattabriga@unibo.it}

\vspace{15 pt} {ENRICO MANFREDI, Department of Mathematics,
University of Bologna, ITALY. E-mail: enrico.manfredi3@unibo.it}

\vspace{15 pt} {MICHELE MULAZZANI, Department of Mathematics and C.I.R.A.M.,
University of Bologna, ITALY. E-mail: michele.mulazzani@unibo.it}


\end{document}